\documentclass{article}
\usepackage{amsfonts,amssymb,mathrsfs,amscd,amsmath,amsthm}
\usepackage{verbatim}

\usepackage[mathscr]{eucal}
\usepackage[dvips]{graphics}
\usepackage[dvips]{graphicx}
\usepackage{floatflt}
\usepackage[T1]{fontenc}
\usepackage{graphicx}
\usepackage{indentfirst}

\usepackage{url}

\usepackage{mathtools}

% \theorembodyfont{\it}

\def\thtext#1{
  \catcode`@=11
  \gdef\@thmcountersep{. #1}
  \catcode`@=12
}

\def\threst{
  \catcode`@=11
  \gdef\@thmcountersep{.}
  \catcode`@=12
}

\theoremstyle{plain}
\newtheorem{thm}{Theorem}[section]
\newtheorem{lem}[thm]{Lemma}
\newtheorem{cor}[thm]{Corollary}
\newtheorem{ass}[thm]{Assertion}

\theoremstyle{definition}
\newtheorem{dfn}[thm]{Definition}
\newtheorem{rk}[thm]{Remark}
\newtheorem{constr}[thm]{Construction}
\newtheorem{examp}[thm]{Example}

\pagestyle{myheadings}

\catcode`@=11
\def\.{.\spacefactor\@m}
\catcode`@=12

\def\R{\mathbb R}

\def\a{\alpha}

\def\e{\varepsilon}
\def\dl{\delta}
\def\D{\Delta}
\def\g{\gamma}

\def\l{\lambda}

\def\s{\sigma}

\def\t{\tau}

\def\0{\emptyset}
\def\:{\colon}
\def\<{\langle}
\def\>{\rangle}

\def\rom#1{\emph{#1}}
\def\({\rom(}
\def\){\rom)}
\def\sm{\setminus}
\def\ss{\subset}

\def\x{\times}

\def\diam{\operatorname{diam}}

\def\dis{\operatorname{dis}}

\def\GH{\operatorname{\mathcal{G\!H}}}

\def\opt{{\operatorname{opt}}}

\def\cB{{\cal B}}
\def\cC{{\cal C}}
\def\cD{{\cal D}}

\def\cH{{\cal H}}

\def\cM{{\cal M}}

\def\cP{{\cal P}}
\def\cR{{\cal R}}

\def\gC{{\mathfrak C}}

\begin{document}
 \title{Extendability of Metric Segments in Gromov--Hausdorff Distance}
\author{S.I.~Borzov, A.O.~Ivanov, A.A.~Tuzhilin}
\date{}
\maketitle

\begin{abstract}
In this paper geometry of Gromov--Hausdorff distance on the class of all metric spaces considered up to an isometry is investigated. For this class continuous curves and their lengths are defined, and it is shown that the Gromov--Hausdorff distance is intrinsic. Besides, metric segments are considered, i.e., the classes of points lying between two given ones, and an extension problem of such segments beyond their end-points is considered.

{\bf Keywords:} Gromov--Hausdorff distance, class of all metric spaces, von Neumann--Bernays--G\"odel axioms, intrinsic distance function, metric segment, extendability of a segment beyond its end-points.
\end{abstract}

%%%%%%%%%%%%%%%%%%%%%%%%%%%%%%
\section*{Introduction}
\markright{Introduction}
%%%%%%%%%%%%%%%%%%%%%%%%%%%%%%
\noindent In this paper the class of all metric spaces considered up to an isometry and endowed with the Gromov--Hausdorff distance is investigated. Recall that this distance is defined as a measure of ``unlikeness'' of metric spaces. In fact, the distance is equal (up to multiplication by $2$) to the least possible distortion of metric over all correspondences (multivalued analogues of bijections) between those spaces. As any distance, it permits to define a convergence of sequences, and as this convergence of metric spaces, so as the distance itself are actively used in many different applications, such as, for example, groups growth velocities or images recognition and comparison.

If one restricts himself by compact metric spaces, then the Gromov--Hausdorff distance is a metric. The resulting metric space $\cM$ is referred as the Gromov--Hausdorff space, and many its properties are well-studied. For example, this space is path-connected, Polish (i.e., separable and complete), and geodesic. Also, the Gromov Criterion is well-known that gives a necessary and sufficient condition for a subset of the Gromov--Hausdorff space to be pre-compact.

Even a minimal extension of this space to the family of proper spaces leads to new effects. First, the distance between different spaces can be infinite. Second, the distance between non-isometric spaces can be zero. But the triangle inequality for the Gromov--Hausdorff distance remains valid for arbitrary metric spaces.

In the non-compact case, a modification of the Gromov--Hausdorff distance that is referred as the pointed convergence is traditionally considered. Namely, in each metric space one chooses a point, and the convergency is defined as the convergence of the balls of the same radius centered at these points. For example, by means of this convergence tangent cones and asymptotic cones with a vertex at a given point are defined. Also notice that in some recent papers, see for example~\cite{Herron}, there are constructed some distance functions that generate the pointed convergence. Moreover, these distance functions turn out to be metrics on the class of pointed proper metric spaces.

In the present paper we investigate the Gromov--Hausdorff distance in its initial definition on the class $\GH$ of all metric spaces considered up to an isometry. To work with such a ``monster--space'' we use the von Neumann--Bernays--G\"odel axioms system that permits to define correctly a distance function even on such a proper class. But to generate a topology on this class by means of this distance function in the standard way turns out to be impossible (see below). We suggest a way to avoid this obstacle by means of a filtration over cardinality. As a result, we define continuous curves in $\GH$ and show that the Gromov--Hausdorff distance is an intrinsic extended pseudometric, i.e\., the distance between any two points is equal to the infimum of the lengths of curves connecting these points.

The second part of the paper is devoted to geometry of metric segments in $\GH$, where a metric segment is defined as a class of points lying between two fixed ones. We show that a metric segment could be a proper class, not a set  (Conjecture: It is always the case). Besides, we study the possibility to extend a metric segment (to another one) beyond some its endpoint. This problem turns out to be rather non-trivial, and its complete solution is unknown even for the Gromov--Hausdorff space $\cM$. The key result is Theorem~\ref{thm:ext_extens} (see below) that gives some sufficient condition of non-extendability of a metric segment beyond one of its endpoints. The final Section~\ref{sec:examples} contains many examples. Let us emphasize an interesting Example~\ref{ex:k_points} based on Hadwiger Theorem solving Borsuk Problem in a particular case. At the end of this Section it is also shown that no metric segment, whose endpoints are bounded metric spaces, can be infinitely extended beyond both its endpoints.

The work is partly supported by RFBR (Project~19-01-00775-a) and by the MGU Program supporting scientific schools.

%%%%%%%%%%%%%%%%%%%%%%%%%%%%%%
\section{Preliminaries}
\markright{\thesection.~Preliminaries}
%%%%%%%%%%%%%%%%%%%%%%%%%%%%%%
\noindent Let $X$ be an arbitrary set. By $\#X$ we denote the cardinality of $X$, and let $\cP_0(X)$ stand for the set of all \textbf{non-empty\/} subsets of $X$. A \emph{distance function on a set $X$} is any symmetric mapping $d\:X\x X\to[0,\infty]$ vanishing at the pairs of coinciding elements. If $d$ satisfies the triangle inequality, then $d$ is referred as a \emph{extended pseudometric}. If in addition $d(x,y)>0$ for all $x\ne y$, then $d$ is called an \emph{extended metric}. If $d(x,y)<\infty$ for all $x,y\in X$, then such distance function is called a \emph{metric}, and sometimes a \emph{finite metric\/} to highlight the difference from an extended metric. A set $X$ with an (extended) (pseudo-)metric is called an \emph{\(extended\/\) \(pseudo-\/\)metric space}.

If $X$ is a set with some distance function, then, as a rule, we denote the distance between points $x$ and $y$ by $|xy|$. If $\g$ is a curve in $X$, then by $|\g|$ we denote its length. Further, let $x,y\in X$ and $|xy|<\infty$, then we say that a point $z\in X$ \emph{lies between $x$ and $y$} if $|xz|+|zy|=|xy|$.  The set of all $z$ lying between $x$ and $y$ is called the \emph{metric segment between $x$ and $y$} and is denoted by $[x,y]$. If $|xy|>0$, then the metric segment is called \emph{non-degenerate}.

\begin{dfn}
We say that a metric segment $[x,y]$ can be \emph{extended beyond the point $y$} if there exists a point $z\in X$ such that $[x,z]$ contains $y$ and $|yz|>0$.
\end{dfn}

\begin{rk}
If $|xy|=0$, then any point $z$ satisfies $|zy|=|zx|=|zx|+|xy|=|zy|+|yx|$, and if $|zx|>0$, then $z$ gives an extension of the metric segment $[x,y]$ as beyond $x$, so as beyond $y$. We are mostly interested in extendability of non-degenerate metric segments.
\end{rk}

Let $X$ be a metric space. For any $A,\,B\in\cP_0(X)$ and $x\in X$ we put
\begin{flalign*}
\indent&|xA|=|Ax|=\inf\bigl\{|xa|:a\in A\bigr\},\quad |AB|=\inf\bigl\{|ab|:a\in A,\,b\in B\bigr\},&\\
\indent&d_H(A,B)=\max\{\sup_{a\in A}|aB|,\sup_{b\in B}|Ab|\}=\max\bigl\{\sup_{a\in A}\inf_{b\in B}|ab|,\,\sup_{b\in B}\inf_{a\in A}|ba|\bigr\}.
\end{flalign*}
The function $d_H\:\cP_0(X)\x\cP_0(X)\to[0,\infty]$ is called the \emph{Hausdorff distance}. It is well-known~\cite{BurBurIva} that $d_H$ is a metric on the family $\cH(X)\ss\cP_0(X)$ of all non-empty closed bounded subsets of $X$.

Let $X$ and $Y$ be metric spaces. A triplet $(X',Y',Z)$ consisting of a metric space $Z$ and two its subsets $X'$ and $Y'$ isometric to $X$ and $Y$, respectively, is called a \emph{realization of the pair $(X,Y)$}. The \emph{Gromov--Hausdorff distance $d_{GH}(X,Y)$ between $X$ and $Y$} is defined as the infimum of values $r$ for which there exists a realization $(X',Y',Z)$ of the pair $(X,Y)$ such that $d_H(X',Y')\le r$.

Notice that the Gromov--Hausdorff distance could take as finite so as infinite values, and it always satisfies the triangle inequality, see~\cite{BurBurIva}. Besides, this distance equals zero for any pair of isometric spaces, therefore, due to the triangle inequality, the Gromov--Hausdorff distance is well-defined on isometry classes of metric spaces (it does not depend on a choice of representatives in the classes). There are some examples of non-isometric metric spaces with zero Gromov--Hausdorff distance~\cite{Ghanaat}.

Since any non-empty set can be endowed with some metric (for example, one can put all non-zero distances to be equal to $1$), so there are ``as many'' isometry classes of metric spaces as all possible sets, i.e., the family of isometry classes is not a set, but it is a proper class which, together with the  Gromov--Hausdorff distance, is denoted by $\GH$. Here we use the concept of \emph{class\/} in the sense of von Neumann--Bernays--G\"odel Set Theory (NBG). Recall some concepts of the NBG.

In NBG all the objects (analogues of usual sets) are referred as \emph{classes}. There are two types of classes: the \emph{sets\/} and the \emph{proper classes}. An example of a proper class is the class of all sets. According to the G\"odel construction, one can distinguish a set from a proper class as follows: for a set there always exists a class containing this set as an element. For a proper class there is no such a class. So, elements of each class are sets. For the classes many standard operations are defined, for example, intersections, complements, products, mappings, etc. Von Neumann Theorem says that a class is a proper one, iff it can be surjectively mapped onto the class of all sets.

Notice that such concepts as the \emph{distance function}, the \emph{\(extended\/\) pseudometric}, and the \emph{\(extended\/\) metric\/} are defined for any class, as for a class that is a set, so as for a class that is a proper one, because the products and the mappings are defined for all classes. However, definitions of other structures on proper classes could face some difficulties. For example, if one tries to define a topology on some proper class $\gC$ in the usual way, then $\gC$ itself must be an element of the topology, therefore, $\gC$ must be a set, a contradiction.

To avoid this problem, we consider a ``filtration'' of a class $\gC$ by its subclasses $\gC_n$ each of which consists of all the elements from $\gC$, whose cardinality is at most $n$, where $n$ is a cardinal. Recall that elements of any class are sets, and hence, the concept of cardinality is well-defined for them. The main examples for us are the class $\GH$ defined above, and the class $\cB$ of all bounded metric spaces considered up to an isometry. Notice that for any cardinal $n$ the subclasses $\GH_n$ and $\cB_n$ are sets. Indeed, the family of all cardinals that do not exceed a given one is a set, and for any fixed cardinal $n$ the family of all isometry classes of metric spaces of cardinality $n$ ``can not be greater'' than the set of all subsets of $X\x X\x\R$, where $X$ is an arbitrary set of cardinality $n$.

We say that a class $\gC$ is \emph{set-filtered\/} if all its subclasses $\gC_n$ are sets. Evidently, if a class $\gC$ is a set, then it is set-filtered.

Thus, let $\gC$ be a set-filtered class. We say that this class satisfies some properties, if these properties are valid for each set $\gC_n$. Let us give some examples.
\begin{itemize}
\item Let a distance function on $\gC$ be given, then it induces a ``usual'' distance function on each set $\gC_n$. Thus, in each  $\gC_n$ all concepts of metric geometry are defined (see above), for example open balls, and they are sets. The latter gives an opportunity to define the metric topology $\t_n$ on $\gC_n$ taking those balls as a base of topology. Clear that for $n\le m$ the topology $\t_n$ is induced by the topology $\t_m$.
\item Moreover, we say that a \emph{topology\/} on $\gC$ is defined, if for any cardinal $n$ a topology $\t_n$ is defined on $\gC_n$ in such a way that the following \emph{consistency condition\/} holds: if $n\le m$, then $\t_n$ is the topology induced by $\t_m$.
\item The presence of some topology on a class $\gC$ makes it possible to define, for example, continuous mappings from a topological space $Z$ to the class $\gC$. Notice that, in accordance with NBG axioms, for an arbitrary mapping $f\:Z\to\gC$ from the set $Z$ to the class $\gC$, the image $f(Z)$ is a set, all whose elements are also some sets, and their union $\cup f(Z)$ is a set of some cardinality $n$. Therefore each element from $f(Z)$ has a cardinality at most $n$, and hence, $f(Z)\ss\gC_n$. We call the mapping  $f$ \emph{continuous\/} if $f$ is continuous as a mapping from $Z$ to the topological space $\gC_n$. The consistency condition implies that for any $m\ge n$ the mapping $f$ is a continuous mapping from $Z$ to $\gC_m$ also, and for any $k\le n$ such that $f(Z)\ss\gC_k$ the mapping $f|_{\gC_k}$ is continuous too.
\item The above construction gives us an opportunity to define \emph{continuous curves in a class $\gC$} endowed with a topology.
\item Let a distance function and the corresponding topology be given on a class $\gC$. We say that this distance function is \emph{intrinsic\/} if it satisfies the triangle inequality, and for any two elements $x,\,y\in\gC$ such that $|xy|<\infty$ the distance $|xy|$ equals the infimum of lengths of curves connecting $x$ and $y$. Below we show that the Gromov--Hausdorff distance is intrinsic as on the class $\GH$, so as on the class $\cB$.
\end{itemize}

The most well-investigated subset of $\GH$ is the set of isometry classes of compact metric spaces. This set is called the \emph{Gromov--Hausdorff space\/} and is often denoted by $\cM$. It is well-known~\cite{BurBurIva,IvaNikolaevaTuz} that the restriction of the Gromov--Hausdorff distance to $\cM$ is a metric, and the metric space $\cM$ is Polish and geodesic.

Notice that to simplify notation, it is convenient not to distinguish isometry classes of metric spaces from their representatives. We have already used such convention, namely, we have defined $\cB$ as the class of bounded metric spaces \emph{considered up to an isometry}. Below we use this identification more than once, and write $X\in\GH$ having in mind that $X$ is some specific metric space.

As a rule, calculation of the Gromov--Hausdorff distance between specific metric spaces is rather hard problem, and for today it is known for a few pairs of spaces, see for example~\cite{GrigIT_Sympl}. The most useful tool for calculation of such kind is the following equivalent definition of the Gromov--Hausdorff distance. Recall that a \emph{relation\/} between sets $X$ and $Y$ is defined as a subset of their Cartesian product $X\x Y$. So, $\cP_0(X\x Y)$ is the set of all non-empty relations between $X$ and $Y$.

\begin{dfn}
For any $X,Y\in\GH$ and any $\s\in\cP_0(X\x Y)$ the value
$$
\dis\s=\sup\Bigl\{\bigl||xx'|-|yy'|\bigr|:(x,y),\,(x',y')\in\s\Bigr\}
$$
is called the \emph{distortion of the relation $\s$}.
\end{dfn}

A relation $R\ss X\x Y$ between sets $X$ and $Y$ is called a \emph{correspondence\/} if the canonical projections $\pi_X\:(x,y)\mapsto x$ and $\pi_Y\:(x,y)\mapsto y$ are surjective on $R$. By $\cR(X,Y)$ we denote the set of all correspondences between $X$ and $Y$.

\begin{thm}[\cite{BurBurIva}]\label{th:GH-metri-and-relations}
For any $X,Y\in\GH$ the equality
$$
d_{GH}(X,Y)=\frac12\inf\bigl\{\dis R:R\in\cR(X,Y)\bigr\}
$$
holds.
\end{thm}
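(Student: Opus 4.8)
The plan is to prove the asserted equality by establishing the two inequalities
$$
d_{GH}(X,Y)\le\tfrac12\inf_{R}\dis R
\qquad\text{and}\qquad
\tfrac12\inf_{R}\dis R\le d_{GH}(X,Y),
$$
where $R$ ranges over $\cR(X,Y)$. Each inequality corresponds to one of the two ways of passing between a realization of the pair $(X,Y)$ and a correspondence between $X$ and $Y$: from a correspondence of small distortion I will build a metric on $X\sqcup Y$ in which the copies of $X$ and $Y$ are Hausdorff-close, and, conversely, from a realization with small Hausdorff distance I will read off a correspondence of small distortion. If $\inf_{R}\dis R=+\infty$, then the first inequality is trivial and the (unconditional) second inequality forces $d_{GH}(X,Y)=+\infty$ as well; hence when proving the first inequality I may assume $\inf_{R}\dis R<\infty$.

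For the second inequality, fix a realization $(X',Y',Z)$ with $d_H(X',Y')\le r$ and identify $X,Y$ with $X',Y'\ss Z$. For $\e>0$ put
$$
R_\e=\bigl\{(x,y)\in X\x Y:|xy|<r+\e\bigr\},
$$
the distance being taken in $Z$. Since $d_H(X',Y')\le r$, every point of $X$ has a point of $Y$ within distance $<r+\e$ and vice versa, so both projections of $R_\e$ are surjective and $R_\e\in\cR(X,Y)$. For $(x,y),(x_1,y_1)\in R_\e$ the triangle inequality in $Z$ gives $|xx_1|\le|xy|+|yy_1|+|y_1x_1|<|yy_1|+2(r+\e)$ and, symmetrically, $|yy_1|<|xx_1|+2(r+\e)$, whence $\dis R_\e\le 2(r+\e)$. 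Therefore $\tfrac12\inf_{R}\dis R\le r+\e$; letting $\e\to0$ and then taking the infimum over all admissible $r$ yields $\tfrac12\inf_{R}\dis R\le d_{GH}(X,Y)$.

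For the first inequality, fix $R\in\cR(X,Y)$ with $\dis R<\infty$, set $r=\tfrac12\dis R$, and define a function $d^*$ on the disjoint union $X\sqcup Y$ that coincides with the given metrics on $X$ and on $Y$ and satisfies, for $x\in X$ and $y\in Y$,
$$
d^*(x,y)=\inf_{(a,b)\in R}\bigl(|xa|+r+|by|\bigr).
$$
The main work---and the step I expect to be the real obstacle---is to check that $d^*$ obeys the triangle inequality, i.e.\ that it is an (extended) pseudometric. The cases in which the middle point lies in the same space as one endpoint follow directly from the internal triangle inequality in $X$ or in $Y$; the essential case, with the two endpoints in $Y$ (or in $X$) and the middle point in the other space, is where one must combine these internal inequalities with the defining distortion estimate $\bigl||aa_1|-|bb_1|\bigr|\le 2r$ for pairs $(a,b),(a_1,b_1)\in R$. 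Carrying this estimate through shows, for instance, $|yy_1|\le d^*(x,y)+d^*(x,y_1)$, and the remaining mixed inequality is symmetric.

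Once $d^*$ is known to be a pseudometric, I pass to its metric quotient $Z$; because $d^*$ restricts to the original genuine metrics on $X$ and on $Y$, these spaces embed isometrically into $Z$, so $(X,Y,Z)$ is a realization of the pair $(X,Y)$. Finally, since $R$ is a correspondence, each $x\in X$ has some $(x,b)\in R$, giving $d^*(x,b)\le r$, and symmetrically each $y\in Y$ is within $d^*$-distance $r$ of $X$; hence $d_H(X,Y)\le r$ in $Z$ and $d_{GH}(X,Y)\le r=\tfrac12\dis R$. Taking the infimum over $R\in\cR(X,Y)$ completes the proof.
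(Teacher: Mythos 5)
Your argument is correct and complete: both directions (correspondence $\to$ gluing pseudometric on $X\sqcup Y$ with the $+r$ shift, and realization $\to$ the relation $R_\e$ of pairs at distance $<r+\e$) are the standard proof of this equivalence, which the paper itself does not prove but simply cites from \cite{BurBurIva}. The one step you flag as the "real obstacle" (the mixed triangle inequality for $d^*$) does go through exactly as you indicate, via $|xx_1|\le|xa|+|aa_1|+|a_1x_1|\le|xa|+|bb_1|+2r+|a_1x_1|\le\bigl(|xa|+r+|by|\bigr)+\bigl(|x_1a_1|+r+|b_1y|\bigr)$, so no gap remains.
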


In what follows we need the following estimates on the Gromov--Hausdorff distance that can be easily verified by means of Theorem~\ref{th:GH-metri-and-relations}. By $\D_1$ we denote the single-point metric space.

\begin{ass}\label{ass:estim}
For any $X,Y\in\GH$ the following relations are valid
\begin{itemize}
\item $2d_{GH}(\D_1,X)=\diam X$\rom;
\item $2d_{GH}(X,Y)\le\max\{\diam X,\diam Y\}$\rom;
\item If either $X$ or $Y$ has a finite diameter, then $\bigl|\diam X-\diam Y\bigr|\le2d_{GH}(X,Y)$.
\end{itemize}
\end{ass}

\begin{cor}
If $X,Y\in\cB$, then $[X,Y]$ is defined and $[X,Y]\ss\cB$.
\end{cor}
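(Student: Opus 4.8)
The plan is to reduce everything to Assertion~\ref{ass:estim}, which already packages the two facts I need relating diameters to the Gromov--Hausdorff distance. Two things must be checked: first, that $[X,Y]$ is even defined, which by the definition of a metric segment requires $d_{GH}(X,Y)<\infty$; and second, that every $Z$ lying between $X$ and $Y$ is again bounded, i.e.\ $Z\in\cB$.

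For the first point I would invoke the second bullet of Assertion~\ref{ass:estim}: since $X,Y\in\cB$ are bounded, both $\diam X$ and $\diam Y$ are finite, so $2d_{GH}(X,Y)\le\max\{\diam X,\diam Y\}<\infty$. Hence $d_{GH}(X,Y)<\infty$, and the metric segment $[X,Y]$ is defined.

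For the second point, let $Z\in[X,Y]$, so that $d_{GH}(X,Z)+d_{GH}(Z,Y)=d_{GH}(X,Y)<\infty$; in particular $d_{GH}(X,Z)<\infty$. Now I would apply the third bullet of Assertion~\ref{ass:estim} to the pair $(X,Z)$. Its hypothesis is met because $X$ has finite diameter, so $\bigl|\diam X-\diam Z\bigr|\le 2d_{GH}(X,Z)<\infty$, whence $\diam Z\le\diam X+2d_{GH}(X,Z)<\infty$. Therefore $Z$ is bounded, i.e.\ $Z\in\cB$, and thus $[X,Y]\ss\cB$.

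The argument is essentially immediate once Assertion~\ref{ass:estim} is available. The only point worth flagging is that the third bullet is one-sided: it requires one of the two spaces to already have finite diameter before it can bound the other. Here that hypothesis is supplied for free by $X\in\cB$, so no circularity arises, and this is really the entire content of the step.
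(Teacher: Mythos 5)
Your proof is correct and follows exactly the route the paper intends: the corollary is stated immediately after Assertion~\ref{ass:estim} precisely so that finiteness of $d_{GH}(X,Y)$ comes from the second bullet and boundedness of any $Z\in[X,Y]$ comes from the third bullet applied to the pair $(X,Z)$. Nothing is missing.
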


For topological spaces $X$ and $Y$, we consider $X\x Y$ as a topological space with the standard topology of Cartesian product. So, \emph{closed relations\/} and \emph{closed correspondences\/} are defined.

A correspondence $R\in\cR(X,Y)$ is called \emph{optimal\/} if $2d_{GH}(X,Y)=\dis R$. By $\cR_\opt(X,Y)$  we denote the set of all optimal correspondences between $X$ and $Y$, and by $\cR_\opt^c(X,Y)$ we denote the subset of $\cR_\opt(X,Y)$ consisting of all closed optimal correspondences.

\begin{thm}[\cite{Memoli,IvaIliadisTuz}]\label{thm:optimal-correspondence-exists}
For any $X,\,Y\in\cM$ there exists a closed optimal correspondence and also a realisation $(X',Y',Z)$ of the pair $(X,Y)$ which the Gromov--Hausdorff distance between $X$ and $Y$ attained at.
\end{thm}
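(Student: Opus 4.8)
I would prove the two claims in turn, obtaining the realizing triple from a closed optimal correspondence. For the first claim I would use compactness of the hyperspace. Since $X,Y\in\cM$ are compact, so is $X\x Y$, and therefore the space $\cH(X\x Y)$ of non-empty closed subsets, equipped with the Hausdorff distance, is compact as well. Choose correspondences $R_n\in\cR(X,Y)$ with $\dis R_n\to 2d_{GH}(X,Y)$; passing to closures preserves both the distortion (the function $\bigl((x,y),(x',y')\bigr)\mapsto\bigl||xx'|-|yy'|\bigr|$ is continuous, so its supremum over a set equals that over the closure) and the surjectivity of the projections, so I may assume each $R_n$ is a closed correspondence, that is, an element of $\cH(X\x Y)$. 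By compactness, I extract a subsequence converging in the Hausdorff distance to some $R\in\cH(X\x Y)$.

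Next I would verify that $R$ is itself an optimal correspondence. Surjectivity passes to the limit: given $x\in X$, choose $y_n$ with $(x,y_n)\in R_n$, use compactness of $Y$ to extract $y_n\to y$, and note that the Hausdorff limit must contain $(x,y)$; the argument for $Y$ is symmetric. For optimality, I pick any two points of $R$, approximate each by points of the corresponding $R_n$ (Hausdorff convergence provides such approximants), bound the relevant expression by $\dis R_n$, and pass to the limit using continuity of the two metrics; this yields $\dis R\le\lim\dis R_n=2d_{GH}(X,Y)$, while the opposite inequality is exactly the content of Theorem~\ref{th:GH-metri-and-relations}. Hence $R\in\cR_\opt^c(X,Y)$.

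To produce the realizing triple I would convert $R$ into a metric on the disjoint union. Set $r=d_{GH}(X,Y)$; if $r=0$ the compact spaces $X$ and $Y$ are isometric and the triple is immediate, so assume $r>0$. On $Z=X\sqcup Y$ let $\rho$ coincide with the given metric on each of $X$ and $Y$, and for $x\in X$, $y\in Y$ put
$$\rho(x,y)=r+\inf\bigl\{|xa|+|by|:(a,b)\in R\bigr\}.$$
This $\rho$ is symmetric and at least $r>0$ between the two copies, so it is nondegenerate; the only nontrivial verification is the triangle inequality for mixed triples, and the critical instances, such as $|x_1x_2|\le\rho(x_1,y)+\rho(y,x_2)$, reduce to the estimate $|aa'|\le|bb'|+\dis R$ valid for all $(a,b),(a',b')\in R$ because $\dis R=2r$. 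Taking $X'=X$ and $Y'=Y$ inside $(Z,\rho)$ gives a realization of $(X,Y)$; since $R$ is a correspondence, each $x\in X$ has a partner $(x,y)\in R$ with $\rho(x,y)\le r$, and symmetrically for $Y$, so $d_H(X',Y')\le r$, while $d_H(X',Y')\ge d_{GH}(X,Y)=r$ by definition. Therefore $d_H(X',Y')=d_{GH}(X,Y)$, as required.

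I expect the main obstacle to be the optimality of the limit correspondence in the second step, namely ruling out an upward jump of the distortion under Hausdorff convergence, since this is precisely where both the compactness of $Y$ (to retain surjective projections) and the continuity of the distance functions are essential; the metric construction in the last step is then routine once the triangle inequality is checked.
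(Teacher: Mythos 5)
The paper does not actually prove this theorem: it is imported from the cited references (Chowdhury--Memoli and Ivanov--Iliadis--Tuzhilin), so there is no in-paper proof to compare against. Your argument is correct and complete, and it is essentially the standard proof from those references: for the first claim, Blaschke compactness of $\cH(X\x Y)$, the observation that taking closures changes neither $\dis$ nor surjectivity of the projections, stability of the correspondence property under Hausdorff limits (using compactness of $X$ and $Y$), and upper semicontinuity of $\dis$ along Hausdorff convergence, with the reverse inequality $\dis R\ge 2d_{GH}(X,Y)$ supplied by Theorem~\ref{th:GH-metri-and-relations}; for the second claim, the usual gluing metric $\rho(x,y)=r+\inf\{|xa|+|by|:(a,b)\in R\}$ on $X\sqcup Y$, whose triangle inequality reduces precisely to $|aa'|\le|bb'|+\dis R$, yielding $d_H(X',Y')\le r$ while every realization satisfies $d_H(X',Y')\ge d_{GH}(X,Y)$ by definition.

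One small streamlining: your case $r=0$ does not need the external fact that compact spaces at zero Gromov--Hausdorff distance are isometric --- it follows from your own first step, since a closed optimal correspondence $R$ with $\dis R=0$ is the graph of an isometry $X\to Y$ (single-valuedness and surjectivity both follow from $\dis R=0$ together with the correspondence property), so the realization is immediate and the proof stays self-contained. Beyond that, the only points worth double-checking in writing this up carefully are the ones you already flagged: that Hausdorff convergence lets you approximate pairs of points of $R$ by pairs in $R_n$ (giving $\dis R\le\liminf\dis R_n$), and that the mixed triangle inequalities for $\rho$ with two points on the same side are the routine ones.
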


\begin{thm}[\cite{Memoli,IvaIliadisTuz}]
For any $X,\,Y\in\cM$ and any $R\in\cR^c_\opt(X,Y)$ the family $R_t$, $t\in[0,1]$, of compact metric spaces, where $R_0=X$, $R_1=Y$, and for $t\in(0,1)$ the space $R_t$ is the set $R$ with the metric
$$
\bigl|(x,y),(x',y')\bigr|_t=(1-t)|xx'|+t\,|yy'|,
$$
is a shortest curve in $\cM$ connecting $X$ and $Y$, and the length of this curve equals $d_{GH}(X,Y)$.
\end{thm}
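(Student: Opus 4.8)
The plan is to exhibit the family $\g\:t\mapsto R_t$ as a Lipschitz curve whose length is squeezed to its least possible value, reading off all the required distance estimates from explicit correspondences built on the set $R$ itself.

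First I would verify that $\g$ is genuinely a curve in $\cM$. For $t\in(0,1)$ the function $|\cdot,\cdot|_t$ is symmetric and satisfies the triangle inequality, being a convex combination of the two pullback pseudometrics $(p,p')\mapsto|xx'|$ and $(p,p')\mapsto|yy'|$; it is positive on distinct points, since for $p=(x,y)\ne(x',y')=p'$ at least one of $|xx'|$, $|yy'|$ is positive while both coefficients $1-t$ and $t$ are positive. As $R$ is closed in the compact space $X\x Y$ it is compact, and since $|\cdot,\cdot|_t$ is bi-Lipschitz equivalent to the metric $(p,p')\mapsto|xx'|+|yy'|$ induced on $R$ from $X\x Y$, the space $R_t$ is compact, hence an element of $\cM$. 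At the endpoints the two pseudometrics degenerate, and the prescription $R_0=X$, $R_1=Y$ is precisely the metric quotient obtained by collapsing pairs at $|\cdot,\cdot|_0$- respectively $|\cdot,\cdot|_1$-distance zero, which is consistent because the projections $\pi_X,\pi_Y$ are surjective on $R$.

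The heart of the argument is a single distortion computation. For $s,t\in(0,1)$ consider the diagonal correspondence $\{(p,p):p\in R\}$ between $R_s$ and $R_t$. Writing $p=(x,y)$, $p'=(x',y')$ one gets
$$
\bigl||pp'|_s-|pp'|_t\bigr|=\bigl|(t-s)\bigl(|xx'|-|yy'|\bigr)\bigr|=|t-s|\cdot\bigl||xx'|-|yy'|\bigr|,
$$
so this correspondence has distortion $|t-s|\,\dis R=2|t-s|\,d_{GH}(X,Y)$, whence $d_{GH}(R_s,R_t)\le|t-s|\,d_{GH}(X,Y)$ by Theorem~\ref{th:GH-metri-and-relations}. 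For comparisons with the endpoints I would instead use the correspondence $\{(x,p):p=(x,y)\in R\}$ between $X$ and $R_t$, whose distortion is $t\,\dis R$ by the same computation, giving $d_{GH}(X,R_t)\le t\,d_{GH}(X,Y)$, and symmetrically $d_{GH}(Y,R_t)\le(1-t)\,d_{GH}(X,Y)$ via $\pi_Y$. Collecting the three cases yields $d_{GH}(R_s,R_t)\le|t-s|\,d_{GH}(X,Y)$ for all $s,t\in[0,1]$, so $\g$ is Lipschitz, hence continuous, and in particular joins $R_0=X$ to $R_1=Y$ continuously.

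Finally I would compute the length. For any partition $0=t_0<\dots<t_k=1$ the Lipschitz estimate gives $\sum_i d_{GH}(R_{t_{i-1}},R_{t_i})\le\sum_i(t_i-t_{i-1})\,d_{GH}(X,Y)=d_{GH}(X,Y)$, so $|\g|\le d_{GH}(X,Y)$; conversely the triangle inequality applied already to the trivial partition yields $|\g|\ge d_{GH}(R_0,R_1)=d_{GH}(X,Y)$. Hence $|\g|=d_{GH}(X,Y)$, which is the least possible length of a curve joining $X$ and $Y$, so $\g$ is shortest. I expect the only genuinely delicate point to be the behaviour at the endpoints: there $|\cdot,\cdot|_t$ collapses to a pseudometric, so the diagonal correspondence no longer compares $R_0$ (or $R_1$) with the prescribed spaces $X$ and $Y$, and one must pass through the projection correspondences to obtain the endpoint estimates and the continuity of $\g$ at $t=0,1$.
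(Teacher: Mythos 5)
Your proposal is correct and follows essentially the same route the paper itself uses to prove the more general Theorem~\ref{thm:inter_metric} (of which this statement is the special case $\e=0$, $R$ optimal): the diagonal correspondence between $R_s$ and $R_t$ and the projection correspondences $R_X$, $R_Y$ at the endpoints give the Lipschitz bound $d_{GH}(R_s,R_t)\le|t-s|\,d_{GH}(X,Y)$, from which the length is squeezed to $d_{GH}(X,Y)$. Your additional verifications that $|\cdot,\cdot|_t$ is a genuine metric for $t\in(0,1)$ and that $R_t$ is compact (via closedness of $R$ and bi-Lipschitz equivalence with the product metric) are correct and supply details the paper leaves implicit.
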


We will also use the following notations. Let $X$ be an arbitrary set and $m$ a cardinal such that $1<m\le\#X$. By $\cC_m(X)$ we denote the family of all possible coverings of the set $X$ by its $m$ non-empty subsets, and by $\cD_m(X)$ we denote the family of all partitions of $X$ into $m$ non-intersecting subsets. Obviously, $\cD_m(X)\ss\cC_m(X)$. If $X$ is a metric space, then for any $D=\{X_i\}_{i\in I}\in\cC_m(X)$ we put
$$
\diam D=\sup_{i\in I}\diam X_i,\quad \a(D)=\inf\bigl\{|X_iX_j|:i\ne j\bigr\}
$$
and define the values
$$
d_m(X)=\inf_{D\in\cD_m(X)}\diam D,\quad \a_m(X)=\sup_{D\in\cD_m(X)}\a(D).
$$

%%%%%%%%%%%%%%%%%%%%%%%%%%%%%%
\section{The Gromov--Hausdorff Distance is Intrinsic}
\markright{\thesection.~The Gromov--Hausdorff Distance is Intrinsic}
%%%%%%%%%%%%%%%%%%%%%%%%%%%%%%
\noindent Let $\gC$ be a set-filtered class endowed with a distance function that satisfies the triangle inequality. Let $x,y\in\gC$, $|xy|<\infty$, and $\g$ be a curve in $\gC$ connecting $x$ and $y$. A curve $\g$ is said to be an \emph{$\e$-shortest\/} for $x$ and $y$ if $0\le|\g|-|xy|\le\e$. It is easy to see that a distance function on $\gC$ that satisfies the triangle inequality is intrinsic if and only if for any pair of elements $x,\,y$ from $\gC$ such that $|xy|<\infty$, and for any $\e>0$, there exists an $\e$-shortest curve connecting $x$ and $y$. We use this reasonings to prove the following Theorem.

\begin{thm}\label{thm:inter_metric}
Let $X$ and $Y$ be arbitrary metric spaces such that $d_{GH}(X,Y)<\infty$. Let $R\in\cR(X,Y)$ be an arbitrary correspondence such that
$$
\dis R-2d_{GH}(X,Y)\le2\e.
$$
Then the family $R_t$, $t\in[0,1]$, of metric spaces, where $R_0=X$, $R_1=Y$, and for $t\in(0,1)$ the space $R_t$ is the set $R$ endowed with the metric
$$
\bigl|(x,y),(x',y')\bigr|_t=(1-t)|xx'|+t\,|yy'|,
$$
is an $\e$-shortest curve in $\GH$ connecting $X$ and $Y$. Moreover, if $X$ and $Y$ are bounded spaces, then all the spaces $R_t$ are also bounded, i.e., the curve $R_t$ is an $\e$-shortest curve in $\cB$.
\end{thm}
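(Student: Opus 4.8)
The plan is to verify, in order: that each $R_t$ is a genuine metric space; that $\g\:t\mapsto R_t$ is a Lipschitz, hence continuous, curve in $\GH$; that its length does not exceed $d_{GH}(X,Y)+\e$; and, under the boundedness hypothesis, that it stays in $\cB$. The whole argument rests on a single distortion computation, so the bulk of the work is just convex-combination bookkeeping together with the care needed at the two endpoints.

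First I would check well-definedness. For fixed $t\in(0,1)$ the function $|\cdot,\cdot|_t$ is a convex combination of the two pullback pseudometrics $\bigl((x,y),(x',y')\bigr)\mapsto|xx'|$ and $\bigl((x,y),(x',y')\bigr)\mapsto|yy'|$ on the set $R$; such a combination is symmetric, vanishes on the diagonal, and satisfies the triangle inequality. Positivity on distinct points holds because if $(x,y)\ne(x',y')$ then $x\ne x'$ or $y\ne y'$, so one of $|xx'|,|yy'|$ is positive while both coefficients $1-t,\,t$ are strictly positive; hence $R_t$ is a metric space, and it is finite since $X,Y$ are. The degenerate endpoints $t=0,1$ give only the pullback pseudometrics, whose metric quotients via the surjections $\pi_X,\pi_Y$ are isometric to $X,Y$; this is what justifies setting $R_0=X$, $R_1=Y$, and the identification will be confirmed a posteriori by the continuity estimate below.

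Next, the key estimate. For $s,t\in(0,1)$ I would use the diagonal correspondence $\{(p,p):p\in R\}\in\cR(R_s,R_t)$ between the two copies of $R$. For $p=(x,y)$, $q=(x',y')$ one gets $|pq|_s-|pq|_t=(t-s)\bigl(|xx'|-|yy'|\bigr)$, so the distortion of the diagonal equals $|s-t|\,\dis R$, and Theorem~\ref{th:GH-metri-and-relations} yields $d_{GH}(R_s,R_t)\le\tfrac12|s-t|\,\dis R$. The endpoints are treated by the correspondence $\{(x,(x,y)):(x,y)\in R\}\in\cR(X,R_s)$ and its mirror $\{(y,(x,y)):(x,y)\in R\}\in\cR(Y,R_s)$, whose distortions are $s\,\dis R$ and $(1-s)\dis R$, giving $d_{GH}(R_0,R_s)\le\tfrac12 s\,\dis R$ and $d_{GH}(R_1,R_s)\le\tfrac12(1-s)\dis R$ in agreement with the interior bound. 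Since $\dis R\le2d_{GH}(X,Y)+2\e<\infty$, the map $\g$ is $\tfrac12\dis R$-Lipschitz. As $R\ss X\x Y$, every $R_t$ has cardinality at most $\#R$, so the image lies in the set $\GH_{\#R}$, and Lipschitz continuity into $(\GH_{\#R},\t_{\#R})$ is, by the set-filtered framework of Section~1, exactly continuity of $\g$ in $\GH$.

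From the Lipschitz bound the length follows: for any partition of $[0,1]$ the sum $\sum_i d_{GH}(R_{t_{i-1}},R_{t_i})\le\tfrac12\dis R$, so $|\g|\le\tfrac12\dis R\le d_{GH}(X,Y)+\e$, while the triangle inequality forces $|\g|\ge d_{GH}(X,Y)$; hence $0\le|\g|-d_{GH}(X,Y)\le\e$ and $\g$ is $\e$-shortest. Finally, if $X,Y$ are bounded then $\diam R_t=\sup_{p,q}|pq|_t\le(1-t)\diam X+t\diam Y\le\max\{\diam X,\diam Y\}<\infty$, so each $R_t\in\cB$, the distances along $\g$ are unchanged by restricting to $\cB$, and the same computation shows $\g$ is $\e$-shortest in $\cB$. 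I expect the only genuinely delicate point to be the behaviour at $t=0,1$: making precise that the pseudometric limits represent $X,Y$ as isometry classes and that continuity extends to the endpoints. The endpoint distortion estimates resolve this, since they show $d_{GH}(X,R_s)\to0$ and $d_{GH}(Y,R_s)\to0$ as $s\to0,1$; everything else is routine.
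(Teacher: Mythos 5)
Your proof is correct and follows essentially the same route as the paper's: the same endpoint correspondences $\{(x,(x,y))\}$ and $\{((x,y),y)\}$ with distortions $t\dis R$ and $(1-t)\dis R$, the identity correspondence between $R_s$ and $R_t$ giving the Lipschitz bound $2d_{GH}(R_s,R_t)\le|s-t|\dis R$, continuity via the set $\GH_{\#R}$, and the length estimate $2|\g|\le\dis R$. Your additional verification that each $R_t$ is a genuine metric space (positivity for $t\in(0,1)$) and your explicit treatment of the endpoints $t=0,1$ are details the paper leaves implicit, but they do not change the argument.
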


\begin{proof}
Put $n=\#R$ and consider the following correspondences $R_X\ss X\times R$ and $R_Y\ss R\times Y$ between $X$ and $R_t$ and between $R_t$ and $Y$, respectively:
$$
R_X=\bigl\{\bigl(x,(x,y)\bigr):x\in X,\,(x,y)\in R\bigr\},\
R_Y=\bigl\{\bigl((x,y),y\bigr):y\in Y,\,(x,y)\in R\bigr\}.
$$
Then $\dis R_X=t\dis R$ and $\dis R_Y=(1-t)\dis R$. Further, taking the identical correspondence between the spaces $R_t$ and $R_s$, where $s,\,t\in(0,1)$, we get $2d_{GH}(R_t,R_s)\le|t-s|\dis R$. Put $\g(t)=R_t$. The above implies that $\g$ is a continuous mapping from $[0,1]$ to $\GH_n$, i.e\., $\g$ is a continuous curve in the class $\GH$. Besides, $2|\g|\le\dis R$, therefore $|\g|\le d_{GH}(X,Y)+\e$, and hence, $\g$ is an $\e$-shortest curve for $X$ and $Y$. It remains to notice that $\diam R_t\le\max\{\diam X,\diam Y\}$, and so, if $X,\,Y\in\cB$, then $R_t\in\cB$ for all $t$ as well.
\end{proof}

\begin{cor}\label{cor:intrinsic}
The Gromov--Hausdorff distance on the class $\GH$ is an intrinsic extended pseudometric. It is an intrinsic finite pseudometric on the class $\cB$.
\end{cor}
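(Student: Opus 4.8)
The plan is to deduce the Corollary directly from Theorem~\ref{thm:inter_metric} together with the criterion for intrinsicness recorded at the beginning of this Section. First I would note that $d_{GH}$ satisfies the triangle inequality on all of $\GH$ (recalled in the Preliminaries, following~\cite{BurBurIva}), so it is an extended pseudometric; it fails to be a metric only because non-isometric spaces may have zero distance, and it is genuinely extended since the distance between two metric spaces may be infinite. Thus the only substantive thing to verify is the intrinsicness property.

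To establish intrinsicness on $\GH$, recall the criterion: since $\GH$ is set-filtered and $d_{GH}$ obeys the triangle inequality, it suffices to produce, for every pair $X,Y\in\GH$ with $d_{GH}(X,Y)<\infty$ and every $\e>0$, an $\e$-shortest curve joining $X$ and $Y$. Fix such $X,Y$ and $\e$. By Theorem~\ref{th:GH-metri-and-relations} we have $d_{GH}(X,Y)=\frac12\inf\bigl\{\dis R:R\in\cR(X,Y)\bigr\}$, so by the definition of infimum there is a correspondence $R\in\cR(X,Y)$ with $\dis R-2d_{GH}(X,Y)\le2\e$. Applying Theorem~\ref{thm:inter_metric} to this $R$ yields the family $R_t$, which is an $\e$-shortest curve in $\GH$ connecting $X$ and $Y$. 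As $\e>0$ was arbitrary, the distance $|XY|$ equals the infimum of lengths of curves joining $X$ and $Y$, which is exactly the definition of intrinsic. Hence $d_{GH}$ is an intrinsic extended pseudometric on $\GH$.

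For the class $\cB$, I would first observe that the distance is always finite there: if $X,Y\in\cB$ then by Assertion~\ref{ass:estim} one has $2d_{GH}(X,Y)\le\max\{\diam X,\diam Y\}<\infty$, so $d_{GH}$ is a finite pseudometric on $\cB$. The argument above then applies verbatim, with the additional input, supplied by the last sentence of Theorem~\ref{thm:inter_metric}, that when $X,Y\in\cB$ all the intermediate spaces $R_t$ are bounded, so the constructed $\e$-shortest curve lies entirely in $\cB$. Therefore the infimum of curve lengths, now taken over curves within $\cB$, again equals $d_{GH}(X,Y)$, proving that $d_{GH}$ is intrinsic on $\cB$ as well.

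I do not anticipate a serious obstacle, as the Corollary is essentially a packaging of Theorem~\ref{thm:inter_metric}. The only points requiring a little care are the passage from the infimum in Theorem~\ref{th:GH-metri-and-relations} to an actual correspondence with near-optimal distortion, which is immediate from the definition of infimum but is precisely what licenses the hypothesis of Theorem~\ref{thm:inter_metric}; and, for $\cB$, confirming finiteness of the distance via Assertion~\ref{ass:estim} so that the $\e$-shortest curve criterion is applicable at all.
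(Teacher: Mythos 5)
Your proposal is correct and follows exactly the route the paper intends: the corollary is stated without proof as an immediate consequence of Theorem~\ref{thm:inter_metric} together with the $\e$-shortest-curve criterion for intrinsicness given at the start of that section, which is precisely the packaging you carry out. Your added care about extracting a near-optimal correspondence from the infimum in Theorem~\ref{th:GH-metri-and-relations} and about finiteness on $\cB$ via Assertion~\ref{ass:estim} is sound and fills in the small details the authors left implicit.
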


We call the $\e$-shortest curve constructed in Theorem~\ref{thm:inter_metric} \emph{linear}.

\begin{rk}
The proof of Theorem~\ref{thm:inter_metric} implies that the linear $\e$-shortest curve connecting metric spaces $X$ and $Y$, $d_{GH}(X,Y)<\infty$, is a Lipschitz curve in $\GH$, and $d_{GH}(X,Y)+\e$ is its Lipschitz constant.
\end{rk}

%%%%%%%%%%%%%%%%%%%%%%%%%%%%%%
\section{Metric Segments and their Extendability}
\markright{\thesection.~Metric Segments and their Extendability}
%%%%%%%%%%%%%%%%%%%%%%%%%%%%%%
\noindent We start with a description of simple properties of $\e$-shortest curves in an extended pseudometric space $\gC$, where $\gC$ is a set-filtered class.

\begin{lem}\label{lem:eps-geod}
Let $x,\,y\in\gC$, $|xy|<\infty$, $\g$ be an $\e$-shortest curve connecting $x$ and $y$, and $w$ be an arbitrary point of the curve $\g$. Then the segments $\g_{xw}$ and $\g_{wy}$ of the curve $\g$ between the points $x$, $w$ and  $w$, $y$, respectively, are $\e$-shortest curves for their endpoints, and, moreover, the inequality $|xw|+|wy|-|xy|\le\e$ holds.
\end{lem}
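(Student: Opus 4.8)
The plan is to reduce everything to two elementary properties of curve length: additivity under splitting at an interior point, and the fact that the length of any curve is at least the distance between its endpoints. Since the image of $\g$ lies in a single $\gC_n$ by the cardinality argument used to define continuous curves, and each such $\gC_n$ is an honest extended pseudometric space, both properties are the classical ones and I would simply invoke them. Concretely, writing $L_1=|\g_{xw}|$ and $L_2=|\g_{wy}|$, additivity gives
$$
|\g|=L_1+L_2,
$$
while the lower bound gives $L_1\ge|xw|$ and $L_2\ge|wy|$. In particular both differences $L_1-|xw|$ and $L_2-|wy|$ are nonnegative, which already establishes the lower halves $0\le L_1-|xw|$ and $0\le L_2-|wy|$ of the two $\e$-shortest conditions.

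Next I would feed in the hypothesis that $\g$ is $\e$-shortest together with the triangle inequality. The hypothesis says $|\g|\le|xy|+\e$, and the triangle inequality gives $|xy|\le|xw|+|wy|$. Combining these with the additivity relation,
$$
\bigl(L_1-|xw|\bigr)+\bigl(L_2-|wy|\bigr)=|\g|-\bigl(|xw|+|wy|\bigr)\le|\g|-|xy|\le\e.
$$
Since each summand on the left is nonnegative, each is individually at most $\e$; that is, $L_1-|xw|\le\e$ and $L_2-|wy|\le\e$, which are precisely the upper halves of the two $\e$-shortest conditions. Hence $\g_{xw}$ and $\g_{wy}$ are $\e$-shortest for their respective pairs of endpoints.

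Finally, the claimed inequality $|xw|+|wy|-|xy|\le\e$ drops out of the same chain: using the length lower bound once more, $|xw|+|wy|\le L_1+L_2=|\g|\le|xy|+\e$, and rearranging gives the assertion.

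I do not expect any genuine obstacle here, as the argument is a short combination of additivity, the triangle inequality, and the $\e$-shortest hypothesis. The only point requiring a word of care is the justification of the two length facts in the set-filtered setting; but since any curve's image is contained in a single set $\gC_n$, they reduce verbatim to the standard statements on lengths of curves in metric spaces, so no new work is needed there.
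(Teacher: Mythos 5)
Your proof is correct and follows essentially the same route as the paper's: split $|\g|-|xy|$ via additivity and the triangle inequality into the sum of the two nonnegative defects $|\g_{xw}|-|xw|$ and $|\g_{wy}|-|wy|$, conclude each is at most $\e$, and obtain the final inequality from $|xw|+|wy|\le|\g|\le|xy|+\e$. No gaps.
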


\begin{proof}
Indeed, by the definition of an $\e$-shortest curve, additivity of the length of a curve, and the triangle inequality we have:
$$
\e\ge|\g|-|xy|=|\g_{xw}|+|\g_{wy}|-|xy|\ge\big(|\g_{xw}|-|xw|\big)+\big(|\g_{wy}|-|wy|\big).
$$
Notice that the expressions in parentheses are non-negative, therefore each of them does not exceed $\e$, and hence,  the curves $\g_{xw}$ and $\g_{wy}$ are $\e$-shortest. Further,
$$
|xw|+|wy|\le|\g_{xw}|+|\g_{wy}|=|\g_{xy}|\le|xy|+\e.
$$
Lemma is proved.
\end{proof}

Generally speaking, the union of two $\e$-shortest curves is not an $\e$-shortest one. However, the following result holds.

\begin{lem}\label{lem:union_eps_geo}
Let $x,\,y\in\gC$, $|xy|<\infty$, and a point $w\in\gC$ lie between the points $x$ and $y$. Then the union of an $\e$-shortest curve $\g_{xw}$ for $x,\,w$ and a $\dl$-shortest curve $\g_{wy}$ for $w,\,y$, respectively, is an $(\e+\dl)$-shortest curve for $x,\,y$.
\end{lem}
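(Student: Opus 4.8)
The plan is to form the concatenated curve $\g=\g_{xw}\cup\g_{wy}$ and estimate its length directly, exploiting the hypothesis that $w$ lies between $x$ and $y$, i.e\. that the triangle inequality degenerates to the equality $|xw|+|wy|=|xy|$. This equality is exactly what separates the present situation from a generic concatenation of two $\e$-shortest curves (for which, as the preceding remark notes, the union need not be $\e$-shortest): the excess term $|xw|+|wy|-|xy|$ that would otherwise appear in the estimate vanishes. So the betweenness assumption is doing all the work, and the argument will be essentially a two-line computation.

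First I would observe that $\g$ is indeed a curve connecting $x$ and $y$ (passing through $w$), and that by additivity of length along a curve one has $|\g|=|\g_{xw}|+|\g_{wy}|$. Then I would apply the defining inequalities of the two pieces: since $\g_{xw}$ is $\e$-shortest, $|\g_{xw}|\le|xw|+\e$, and since $\g_{wy}$ is $\dl$-shortest, $|\g_{wy}|\le|wy|+\dl$. Adding these two estimates and substituting the betweenness relation gives
$$
|\g|\le|xw|+|wy|+\e+\dl=|xy|+\e+\dl,
$$
so that $|\g|-|xy|\le\e+\dl$, which is the upper half of the $(\e+\dl)$-shortest condition.

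For the lower bound $|\g|-|xy|\ge0$ I would invoke the general fact, valid in any extended pseudometric space, that the length of a curve is never smaller than the distance between its endpoints; this yields $|\g|\ge|xy|$ and completes the verification that $\g$ is $(\e+\dl)$-shortest for $x,\,y$. I do not anticipate a genuine obstacle here: the only conceptual point is to recognize that the betweenness hypothesis is precisely what keeps an extra positive term from spoiling the estimate, and the single background fact required (that curve length dominates endpoint distance) is standard and has already been used implicitly in the definition of an $\e$-shortest curve.
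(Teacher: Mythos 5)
Your argument is exactly the paper's proof: concatenate the two curves, add the defining inequalities $|\g_{xw}|\le|xw|+\e$ and $|\g_{wy}|\le|wy|+\dl$, and use the betweenness equality $|xw|+|wy|=|xy|$ together with additivity of length to get $|\g|-|xy|\le\e+\dl$. Your explicit remark that the lower bound $|\g|\ge|xy|$ follows from length dominating endpoint distance is a point the paper leaves implicit, but the route is identical and correct.
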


\begin{proof}
Indeed, denote by $\g_{xy}$ the union of the curves $\g_{xw}$ and $\g_{wy}$. Then
$$
|xy|=|xw|+|wy|\ge|\g_{xw}|-\e+|\g_{wy}|-\dl=|\g_{xy}|-(\e+\dl).
$$
Lemma is proved.
\end{proof}

Lemmas~\ref{lem:eps-geod} and~\ref{lem:union_eps_geo} imply the following estimate.

\begin{cor}\label{cor:between}
Let $x,\,y\in\gC$, $|xy|<\infty$, and a point $w\in\gC$ lie between the points $x$ and $y$. Let $\g_{xw}$ and $\g_{wy}$ be $\e$-shortest curves connecting $x,\,w$ and $w,\,y$, respectively. Then for any points $p\in\g_{xw}$ and $q\in\g_{wy}$ the inequality $|py|+|yq|-|pq|\le2\e$ holds.
\end{cor}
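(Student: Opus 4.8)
The plan is to deduce the estimate from a single concatenation (Lemma~\ref{lem:union_eps_geo}) followed by two localizations (Lemma~\ref{lem:eps-geod}), which is exactly the toolkit the corollary advertises. First I would join $\g_{xw}$ and $\g_{wy}$ at $w$ to form a curve $\g_{xy}$ connecting $x$ and $y$. Since $w$ lies between $x$ and $y$ and both pieces are $\e$-shortest, Lemma~\ref{lem:union_eps_geo} (with $\dl=\e$) gives at once that $\g_{xy}$ is a $2\e$-shortest curve for $x,\,y$. By construction $p$ lies on the initial portion $\g_{xw}$ and $q$ on the terminal portion $\g_{wy}$, so along $\g_{xy}$ the point $q$ comes after $p$; in particular $q$ lies on the sub-curve of $\g_{xy}$ running from $p$ to $y$.

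Next I would localize twice. Applying Lemma~\ref{lem:eps-geod} to the $2\e$-shortest curve $\g_{xy}$ and the point $p$ shows that the sub-curve $\g_{py}$ from $p$ to $y$ is itself a $2\e$-shortest curve for $p,\,y$. Since $q$ is a point of this $2\e$-shortest curve $\g_{py}$, a second application of Lemma~\ref{lem:eps-geod} (its concluding inequality, read with $2\e$ in place of $\e$) yields $|pq|+|qy|-|py|\le2\e$. This says precisely that $q$ lies $2\e$-almost between $p$ and $y$, and it is the estimate that the two Lemmas genuinely deliver.

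The step I expect to be the real obstacle is the \emph{direction} of the concluding inequality, so I would isolate it explicitly. The chain above produces $|pq|+|qy|-|py|\le2\e$; the displayed form $|py|+|yq|-|pq|\le2\e$ transposes $|py|$ and $|pq|$ and instead asserts that $y$ is $2\e$-almost between $p$ and $q$, which cannot hold in general. Indeed, already for genuine shortest curves ($\e=0$) with the natural order $x,\,p,\,w,\,q,\,y$ the point $q$ is strictly between $p$ and $y$, so $|py|-|pq|=|qy|>0$ and hence $|py|+|yq|-|pq|=2|qy|$, which is positive. I would therefore record the inequality in the order the Lemmas force, namely $|pq|+|qy|-|py|\le2\e$, the almost-betweenness of $q$ between $p$ and $y$; apart from keeping this ordering straight, the argument is just the two invocations of Lemma~\ref{lem:eps-geod} on top of Lemma~\ref{lem:union_eps_geo}.
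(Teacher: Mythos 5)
You correctly spotted that the displayed inequality is false as printed---your $\e=0$ counterexample with the order $x,\,p,\,w,\,q,\,y$ is sound---but you mis-diagnosed the typo, and as a result you prove the wrong statement. The intended correction is not to transpose $|py|$ and $|pq|$ but to replace $y$ by the junction point $w$: the corollary should read $|pw|+|wq|-|pq|\le2\e$, i.e., $w$ almost lies between $p$ and $q$. This is confirmed by the only place the corollary is used, the proof of Theorem~\ref{thm:ext_extens}: there one takes $x=X$, $w=Y$, $y=Z$, with $p=X\in\g_{xw}$ and $q=R_s\in\g_{wy}$, and what is invoked is precisely $2d_{GH}(X,R_s)\ge 2d_{GH}(X,Y)+2d_{GH}(Y,R_s)-4\e$, which is the doubled form of $|pw|+|wq|-|pq|\le2\e$. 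Your inequality $|pq|+|qy|-|py|\le2\e$ bounds $|pq|$ \emph{from above} in terms of $|py|-|qy|$, whereas the theorem needs a \emph{lower} bound on $|pq|$ in terms of $|pw|+|wq|$; with your version of the corollary the contradiction argument in Theorem~\ref{thm:ext_extens} collapses. So the missing idea is the almost-betweenness of the junction point $w$, which your final inequality neither states nor implies.

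The good news is that your machinery reaches the intended statement with one more localization. Having shown (correctly) via Lemma~\ref{lem:union_eps_geo} that the concatenation $\g_{xy}$ is $2\e$-shortest and via Lemma~\ref{lem:eps-geod} that the sub-curve $\g_{py}$ is $2\e$-shortest, apply Lemma~\ref{lem:eps-geod} once more at $q$ to conclude that the sub-curve $\g_{pq}$ is itself $2\e$-shortest; since $w$ lies on $\g_{pq}$ (it sits between $p$ and $q$ along the concatenation), a final application of the concluding inequality of Lemma~\ref{lem:eps-geod} at $w$ gives $|pw|+|wq|-|pq|\le2\e$, as required. A direct route avoiding concatenation altogether: Lemma~\ref{lem:eps-geod} applied to each curve separately yields $|pw|\le|xw|-|xp|+\e$ and $|wq|\le|wy|-|qy|+\e$, while two triangle inequalities together with the betweenness hypothesis $|xy|=|xw|+|wy|$ give $|pq|\ge|xw|+|wy|-|xp|-|qy|$; subtracting proves the estimate. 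Note that the paper offers no written proof---only the remark that Lemmas~\ref{lem:eps-geod} and~\ref{lem:union_eps_geo} imply the estimate---so the three-step localization just described is presumably the intended derivation, and it is essentially yours plus the one step you stopped short of.
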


Let us list several elementary properties of metric segments.

\begin{lem}\label{lem:metr_segm}
For any $x,y,z\in\gC$ we have
\begin{itemize}
\item If $y\in[x,z]$, then $[x,y]\ss[x,z]$ and $[y,z]\ss[x,z]$\rom;
\item If $y$ lies between $x$ and $z$, then $y$ lies also between any points $x'\in[x,y]$ and $z'\in[y,z]$.
\end{itemize}
\end{lem}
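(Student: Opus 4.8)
The plan is to treat both items by the same elementary device: build a chain of inequalities whose two ends coincide, forcing every intermediate triangle inequality to be an equality. The only tools needed are the definition of betweenness (the point $w$ lies between $a$ and $b$ iff $|aw|+|wb|=|ab|$) and the triangle inequality, which holds in $\gC$ since the distance is an extended pseudometric.

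For the first item, assume $y\in[x,z]$, i.e. $|xy|+|yz|=|xz|$. To prove $[x,y]\ss[x,z]$, take any $w\in[x,y]$, so $|xw|+|wy|=|xy|$, and write
$$
|xz|=|xy|+|yz|=|xw|+|wy|+|yz|\ge|xw|+|wz|\ge|xz|,
$$
where the first $\ge$ is the triangle inequality $|wy|+|yz|\ge|wz|$ and the second is $|xw|+|wz|\ge|xz|$. Since the two ends agree, all inequalities are equalities, giving $|xw|+|wz|=|xz|$, i.e. $w\in[x,z]$. The inclusion $[y,z]\ss[x,z]$ is completely symmetric: for $w\in[y,z]$ one expands $|xz|=|xy|+|yz|=|xy|+|yw|+|wz|\ge|xw|+|wz|\ge|xz|$ and concludes in the same way. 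I expect this step to be routine.

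For the second item, let $y$ lie between $x$ and $z$, let $x'\in[x,y]$ and $z'\in[y,z]$, so we have the three equalities $|xy|+|yz|=|xz|$, $|xx'|+|x'y|=|xy|$, and $|yz'|+|z'z|=|yz|$. The goal is $|x'y|+|yz'|=|x'z'|$; the inequality $|x'z'|\le|x'y|+|yz'|$ is just the triangle inequality, so the content is the reverse bound. Here I would invoke the first item: since $y\in[x,z]$, the inclusion $[x,y]\ss[x,z]$ gives $x'\in[x,z]$, hence $|x'z|=|xz|-|xx'|$. Then, using the reverse triangle inequality $|x'z'|\ge|x'z|-|z'z|$ together with $|z'z|=|yz|-|yz'|$ and $|xz|=|xy|+|yz|$, one computes
$$
|x'z'|\ge|x'z|-|z'z|=\bigl(|xz|-|xx'|\bigr)-\bigl(|yz|-|yz'|\bigr)=\bigl(|xy|-|xx'|\bigr)+|yz'|=|x'y|+|yz'|,
$$
which combined with the triangle inequality yields the desired equality.

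The main subtlety to watch for is that one cannot argue Part 2 merely from the fact that $x'$ and $z'$ both lie on $[x,z]$: in a general (extended pseudo)metric space two points of a segment need not be in betweenness relation with each other (for instance, in $\R^2$ with the $\ell^1$-metric the whole unit square is a segment, yet its opposite corners are not between an endpoint and one another). Thus the hypotheses $x'\in[x,y]$ and $z'\in[y,z]$ must be used through their defining equalities, and the correct choice of direction for the reverse triangle inequality (applied to the triple $x',z',z$) is exactly what makes the estimate close.
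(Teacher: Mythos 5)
Your proof is correct. The paper states this lemma without proof (it is introduced as a list of elementary properties), and your argument --- squeezing a chain of triangle inequalities between two copies of $|xz|$ for the first item, and combining the first item with the reverse triangle inequality applied to $x',z',z$ for the second --- is exactly the standard argument the authors evidently had in mind; the closing remark about why part 2 cannot be deduced merely from $x',z'\in[x,z]$ is a worthwhile observation, not a gap.
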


\begin{rk}
If the space $\gC$ is geodesic, then extendability of a metric segment $[x,y]$ beyond $y$ is equivalent to existence of a point $z\in\gC$ such that any shortest curve connecting $x$ and $y$ is contained in a shortest curve connecting $x$ and $z$.
\end{rk}

\begin{rk}
If $\gC$ is a proper class, then a metric segment could also be a proper class. As an example, consider the space $\GH$ and the metric segment $[\D_1,\D_n]$ in it, where $\D_n$ stands for the metric space of cardinality $n$ all whose non-zero distances are equal to $1$. It is easy to verify that the curve $\g(t)$, $t\in[0,1]$, where $\g(0)=\D_1$ and $\g(t)=t\,\D_n$ for all other $t$, is a shortest curve connecting $\D_1$ and $\D_n$. Consider the space $Z=\frac12\D_n$, choose its arbitrary point $z\in Z$, and change it with a non-empty set $A$ of an arbitrary cardinality. Fix a positive $\e<1/2$. On the resulting set $Z'=\bigl(Z\sm\{z\}\bigr)\sqcup A$ redefine the distance as follows: $|aa'|=\e$ and $|az'|=|zz'|=1/2$ for any distinct $a,\,a'\in A$ and any $z'\in Z\sm\{z\}$. It is easy to verify that $d_{GH}(Z',\D_1)=d_{GH}(Z,\D_1)$ and $d_{GH}(Z',\D_n)=d_{GH}(Z,\D_n)$, therefore, any such space $Z'$ lies between $\D_1$ and $\D_n$, i.e., it belongs to the metric segment $[\D_1,\D_n]$. Since the cardinality of the space $Z'$ is arbitrary, then $[\D_1,\D_n]$ is a proper class.
\end{rk}

Let $X\in\cB$, and $C=\{X_i\}_{i\in I}$ be a covering of $X$ by non-empty sets. We say that $\{X_i\}_{i\in I}$ is a covering by \emph{sets of less diameter than $X$} if $\diam C<\diam X$.

\begin{lem}\label{lem:near_covered}
Let $X\in\cB$, and $C_X=\{X_i\}_{i\in I}$ be a covering of $X$ by non-empty subsets of less diameter than $X$. Put $\dl_X=\diam X-\diam C_X>0$. Then any $Y\in\cB$ such that $d_{GH}(X,Y)<\e=\dl_X/5$, has a similar representation, i.e., there exists a covering $C_Y=\{Y_i\}_{i\in I}$ of the space $Y$ by non-empty subsets of less diameter, and $\diam Y-\diam C_Y>\e$.
\end{lem}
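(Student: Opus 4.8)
The plan is to transport the covering $C_X$ from $X$ to $Y$ by means of a nearly optimal correspondence, and then to control all the diameters involved via the distortion of this correspondence together with Assertion~\ref{ass:estim}.

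First I would invoke Theorem~\ref{th:GH-metri-and-relations}: since $d_{GH}(X,Y)<\e$, we have $\inf\{\dis R:R\in\cR(X,Y)\}=2d_{GH}(X,Y)<2\e$, so there is a correspondence $R\in\cR(X,Y)$ with $\dis R<2\e$. Using $R$, for each $i\in I$ I would set $Y_i=\pi_Y\bigl(R\cap(X_i\x Y)\bigr)$, i.e., the set of all $y\in Y$ that are $R$-related to some point of $X_i$. Because $\pi_X$ is surjective on $R$, each $x\in X_i$ has an $R$-partner, so every $Y_i$ is non-empty; because $\{X_i\}$ covers $X$ and $\pi_Y$ is surjective on $R$, every $y\in Y$ is the partner of some $x$, which lies in some $X_i$, and hence $y\in Y_i$. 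Thus $C_Y=\{Y_i\}_{i\in I}$ is a covering of $Y$ indexed by the same set $I$.

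Next I would estimate the diameters. For $y,y'\in Y_i$ choose $x,x'\in X_i$ with $(x,y),(x',y')\in R$; the definition of distortion gives $|yy'|\le|xx'|+\dis R\le\diam X_i+\dis R$, whence $\diam Y_i\le\diam C_X+\dis R$ and therefore $\diam C_Y\le\diam C_X+\dis R<\diam C_X+2\e$. On the other side, since $X$ is bounded, the third item of Assertion~\ref{ass:estim} yields $\diam X-\diam Y\le2d_{GH}(X,Y)<2\e$, i.e., $\diam Y>\diam X-2\e$.

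Finally I would combine the two bounds. Subtracting them and recalling $\diam X-\diam C_X=\dl_X=5\e$, I obtain
$$
\diam Y-\diam C_Y>(\diam X-2\e)-(\diam C_X+2\e)=\dl_X-4\e=\e,
$$
which is exactly the required inequality; since $\e>0$, it also shows a posteriori that $C_Y$ consists of sets of less diameter than $Y$. The argument is essentially a direct application of the correspondence description of $d_{GH}$ together with the diameter comparison in Assertion~\ref{ass:estim}, so I do not expect a genuine obstacle. The only delicate point is the bookkeeping of strict versus non-strict inequalities: the choice $\e=\dl_X/5$ is precisely what makes $\dl_X-4\e$ reduce to $\e$ with the correct strict sign, and one must be careful that the strictness is supplied by $\dis R<2\e$ and by $\diam X-\diam Y<2\e$ rather than lost in the two $\le$ estimates for $\diam C_Y$.
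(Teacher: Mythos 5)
Your proof is correct and follows essentially the same route as the paper's: pick $R$ with $\dis R<2\e$, push the covering forward via $Y_i=R(X_i)$, bound $\diam C_Y\le\diam C_X+\dis R$, and compare with $\diam Y\ge\diam X-\dis R$ (you route this last step through Assertion~\ref{ass:estim}, the paper uses $\diam X\le\diam Y+\dis R$ directly, which is the same estimate). The bookkeeping of strict inequalities and the final arithmetic $\dl_X-4\e=\e$ match the paper exactly.
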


\begin{proof}
Since $d_{GH}(X,Y)<\e$, then there exists an $R\in\cR(X,Y)$ such that $\dis R<2\e$. For each $i\in I$ put $Y_i=R(X_i)$. Since $R$ is a correspondence, then $Y_i\ne\0$ for any $i\in I$, and $C_Y:=\{Y_i\}_{i\in I}$ is a covering of $Y$ by non-empty sets. Since $\dis R<2\e$, $\diam Y_i\le\diam X_i+\dis R$, and $\diam X\le\diam Y+\dis R$, then
\begin{multline*}
\diam C_Y\le\diam C_X+\dis R<\diam C_X+2\e=\diam X-\dl_X+2\e=\\
=\diam X-3\e\le\diam Y+\dis R-3\e<\diam Y-\e.
\end{multline*}
Lemma is proved.
\end{proof}

% for a case
\begin{lem}\label{lem:near_covered_non_bound}
Let $X\in\GH$, and $C_X=\{X_i\}_{i\in I}$ be a covering of $X$ by non-empty subsets of finite diameter. Then any $Y\in\GH$ such that $d_{GH}(X,Y)<\infty$ has a similar representation, i.e., there exists a covering $C_Y=\{Y_i\}_{i\in I}$ of the space $Y$ by non-empty subsets of finite diameter, and  $\diam C_Y\le\diam C_X+2d_{GH}(X,Y)$.
\end{lem}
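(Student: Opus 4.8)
The plan is to imitate the proof of Lemma~\ref{lem:near_covered}, replacing the diameter-gap bookkeeping by the relation characterization of $d_{GH}$ from Theorem~\ref{th:GH-metri-and-relations}, and to dispose first of the degenerate case in which the right-hand side is infinite. If $\diam C_X=\infty$ the asserted inequality is vacuous, so it suffices to produce \emph{some} covering of $Y$ by non-empty finite-diameter pieces indexed by $I$; since $d_{GH}(X,Y)<\infty$, Theorem~\ref{th:GH-metri-and-relations} supplies a correspondence $R\in\cR(X,Y)$ with $\dis R<\infty$, and the sets $Y_i=R(X_i)$ will do (see the core estimate below). Hence I may assume $\diam C_X<\infty$ and abbreviate $d=d_{GH}(X,Y)$.

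The core estimate is as follows. For an arbitrary $R\in\cR(X,Y)$ put $Y_i=R(X_i)$. Each $Y_i$ is non-empty (as $X_i\ne\0$ and $R$ projects onto $X$), the family $\{Y_i\}_{i\in I}$ covers $Y$ (as $R$ projects onto $Y$ and $C_X$ covers $X$), and for $y,y'\in Y_i$ one chooses $x,x'\in X_i$ with $(x,y),(x',y')\in R$ and obtains $|yy'|\le|xx'|+\dis R\le\diam X_i+\dis R$. Thus $\diam Y_i\le\diam X_i+\dis R$, whence $\diam C_Y\le\diam C_X+\dis R$. Since $\inf\{\dis R:R\in\cR(X,Y)\}=2d$ by Theorem~\ref{th:GH-metri-and-relations}, for every $\e>0$ this already yields a covering $C_Y$ with $\diam C_Y\le\diam C_X+2d+\e$, the natural ``$\e$-version'' of the claim.

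The delicate point is to remove the $\e$, i.e\., to reach the \emph{sharp} constant $\diam C_X+2d$ within a \emph{single} covering, for in general spaces of $\GH$ the infimum in Theorem~\ref{th:GH-metri-and-relations} need not be attained (there is no optimal correspondence, unlike in $\cM$). My plan here is a limiting construction: choose $R^{(k)}\in\cR(X,Y)$ with $\dis R^{(k)}<2d+1/k$, fix a non-principal ultrafilter $\cU$ on $\N$, and set
$$
Y_i=\bigl\{y\in Y:\{k:y\in R^{(k)}(X_i)\}\in\cU\bigr\}.
$$
For $y,y'\in Y_i$ the two index sets lie in $\cU$, so their intersection lies in $\cU$ and is in particular infinite; taking $k$ in it arbitrarily large gives $|yy'|\le\diam X_i+2d+1/k$, whence $\diam Y_i\le\diam X_i+2d$ and $\diam C_Y\le\diam C_X+2d$, exactly as required.

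I expect the covering property of this $\{Y_i\}$ to be the main obstacle. When $I$ is finite it is immediate: for each $y$ the sets $\{k:y\in R^{(k)}(X_i)\}$, $i\in I$, cover $\N\in\cU$, and since a finite union lying in $\cU$ forces one of its members into $\cU$, some $Y_i$ catches $y$. For infinite $I$ this pigeonhole fails, and a fixed ultrafilter may leave some $y$ uncovered because its membership is dispersed over infinitely many indices. Resolving this is the crux, and I would do so by exploiting a balance hidden in the statement: if $Y$ carried many points pairwise farther apart than $\diam C_X+2d$, the collision penalty forced in $\dis R$ (distinct images of one point contribute their mutual distance to the distortion) would already make $d$ large, so that $\diam C_X+2d$ is never too small for such a $Y$. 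I would make this balance quantitative, or alternatively reassign the finitely many ``dispersed'' directions by a transfinite argument on the well-ordered index set $I$, in order to finish the covering step.
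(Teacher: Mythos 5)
Your ``core estimate'' is, almost word for word, the paper's entire proof: the authors pick any $\e>d_{GH}(X,Y)$, take $R\in\cR(X,Y)$ with $\dis R<2\e$, set $Y_i=R(X_i)$, check that each $Y_i$ is non-empty, that $\{Y_i\}_{i\in I}$ covers $Y$, and that $\diam Y_i\le\diam X_i+\dis R$, and then conclude $\diam C_Y\le\diam C_X+2\e$, ``that implies the statement of Lemma.'' So up to the end of your second paragraph you have reproduced their argument exactly, including the point that each $Y_i$ has finite diameter because each $\diam X_i$ is finite and $\dis R<\infty$.

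The divergence is in your last two paragraphs, and there the honest verdict is that you have identified a real subtlety but not resolved it. You are right that the construction only delivers the constant $2d_{GH}(X,Y)+\e$ with a covering depending on $\e$, while the stated inequality carries the sharp constant; the paper passes over this without comment, and the lemma is never invoked later, so nothing downstream depends on sharpness. Your ultrafilter limit is a sensible idea for removing the $\e$, but, as you concede, the sets $Y_i$ so defined need not cover $Y$ when $I$ is infinite, and your closing paragraph is a declaration of intent rather than an argument: the ``collision penalty'' heuristic concerns distances inside $Y$, whereas what you actually need is that every $y\in Y$ belongs to $R^{(k)}(X_i)$ for a $\cU$-large set of $k$ for \emph{some single} $i$, which is a different and genuinely problematic statement. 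So the part of your proposal that is rigorous coincides with what the paper proves; the part that goes beyond it is unfinished. Either complete that step or, more economically, prove the lemma in the form the construction actually gives (with $2d_{GH}(X,Y)$ replaced by $2d_{GH}(X,Y)+\e$ for arbitrary $\e>0$), which is all the paper itself establishes and all it ever needs.
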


\begin{proof}
Let $d_{GH}(X,Y)<\e$, then there exists an $R\in\cR(X,Y)$ such that $\dis R<2\e$. For each $i\in I$ put $Y_i=R(X_i)$. Since $R$ is a correspondence, then $Y_i\ne\0$ for any $i\in I$, and $C_Y:=\{Y_i\}_{i\in I}$ is a covering of $Y$ by non-empty sets. Since $\dis R<2\e$, then $\diam Y_i\le\diam X_i+\dis R$ and $\diam C_Y\le\diam C_X+2\e$, that implies the statement of Lemma.
\end{proof}

By means of the following standard construction, one can pass form a covering to a partition whose cardinality and diameter do not increase.

\begin{constr}\label{constr:part}
Let $X\in\GH$, and $C=\{X_i\}_{i\in I}$ be a covering of $X$ such that $\diam C<\diam X$. By well-ordering Zermelo's theorem, we can introduce a strict total order on the index set $I$, and put
$$
X'_i=X_i\setminus \bigcup_{j:j<i}X_j,\quad i\in I.
$$
After eliminating all empty $X'_i$, we obtain a partition $D$ of the space $X$ into $m\le\#I$ non-empty subsets such that $\diam D\le\diam C<\diam X$.
\end{constr}

\begin{lem}\label{lem:dist_part}
Let $X,Y\in\cB$ be such that
\begin{enumerate}
\item $d:=\diam Y-\diam X>0$\rom;
\item There exists a partition $D_X=\{X_i\}_{i\in I}$ of the space $X$ with $\a(D_X)>0$\rom;
\item There exists a covering $C_Y=\{Y_j\}_{j\in J}$ of the space $Y$ by subsets of less diameter such that  $\dl_Y:=\diam Y-\diam C_Y>0$\rom;
\item $\#J\le\#I$.
\end{enumerate}
Then $\diam Y-2d_{GH}(X,Y)\ge\min\{d,\a(D_X),\dl_Y\}>0$.
\end{lem}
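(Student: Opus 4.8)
The plan is to turn the claimed lower bound on $\diam Y-2d_{GH}(X,Y)$ into an \emph{upper} bound on the distance, and then to certify that upper bound by exhibiting a single good correspondence. Writing $\mu=\min\{d,\a(D_X),\dl_Y\}$, the desired inequality is equivalent to $2d_{GH}(X,Y)\le\diam Y-\mu$. By Theorem~\ref{th:GH-metri-and-relations}, $2d_{GH}(X,Y)=\inf\{\dis R:R\in\cR(X,Y)\}$, so it is enough to construct one correspondence $R\in\cR(X,Y)$ with $\dis R\le\diam Y-\mu$. The strictness in the conclusion is then free: the three hypotheses $d>0$, $\a(D_X)>0$, $\dl_Y>0$ force $\mu>0$.

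To build $R$ I would pair blocks by index. Since $\#J\le\#I$ and $J\ne\0$, choose a surjection $p\:I\to J$ and set
$$
R=\bigl\{(x,y)\in X\x Y:\ x\in X_i\ \text{and}\ y\in Y_{p(i)}\ \text{for some}\ i\in I\bigr\}.
$$
Because $D_X$ is a \emph{partition}, each $x$ lies in a unique block $X_{i(x)}$, so $(x,y)\in R$ precisely when $y\in Y_{p(i(x))}$; as every $Y_{p(i)}$ is non-empty, $\pi_X(R)=X$. Surjectivity of $p$ together with the fact that $C_Y$ covers $Y$ gives $\pi_Y(R)=Y$, so $R$ is a correspondence. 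Note that I use the covering $C_Y$ directly here, without first passing to a partition; Construction~\ref{constr:part} could be invoked but is not needed.

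It remains to estimate $\dis R$ by a case split. Take $(x,y),(x',y')\in R$ and put $i=i(x)$, $i'=i(x')$, so $y\in Y_{p(i)}$ and $y'\in Y_{p(i')}$. If $i=i'$, then $x,x'$ lie in the same block $X_i$ and $y,y'$ in the same block $Y_{p(i)}$, so $|xx'|\le\diam X=\diam Y-d$ and $|yy'|\le\diam C_Y=\diam Y-\dl_Y$, whence $\bigl||xx'|-|yy'|\bigr|\le\diam Y-\min\{d,\dl_Y\}$. If $i\ne i'$, then $|xx'|\ge|X_iX_{i'}|\ge\a(D_X)$ while $|xx'|\le\diam X$ and $0\le|yy'|\le\diam Y$, so $|xx'|-|yy'|\le\diam Y-d$ and $|yy'|-|xx'|\le\diam Y-\a(D_X)$. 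In both cases $\bigl||xx'|-|yy'|\bigr|\le\diam Y-\mu$, hence $\dis R\le\diam Y-\mu$, which closes the argument.

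The step requiring the most care—more a design point than a genuine obstacle—is that $D_X$ must be a true partition rather than merely a covering: this is exactly what forces two pairs sharing the same first coordinate (in particular $x=x'$, where $|xx'|=0$) to have their second coordinates in a common $Y$-block, so that $|yy'|$ is controlled by $\dl_Y$ and not by the full $\diam Y$. The cardinality hypothesis $\#J\le\#I$ is what makes the index surjection $p$, and therefore the correspondence, exist, and the three positive quantities enter complementarily: $d$ bounds $|xx'|-|yy'|$ from above, $\dl_Y$ handles the same-block case on the $Y$ side, and $\a(D_X)$ handles the different-block case on the $X$ side.
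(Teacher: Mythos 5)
Your proof is correct and takes essentially the same route as the paper: both certify $2d_{GH}(X,Y)\le\diam Y-\min\{d,\a(D_X),\dl_Y\}$ by exhibiting an explicit block-matching correspondence and estimating its distortion with the same case split (same block versus different blocks, controlled by $\dl_Y$, $d$, and $\a(D_X)$ respectively). The only, harmless, difference is that you pair blocks via a surjection $I\to J$ and work with $C_Y$ directly as a covering, whereas the paper first refines $C_Y$ to a partition via Construction~\ref{constr:part} and then uses an injection of its index set into $I$; your variant skips that preliminary step.
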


\begin{proof}
Using Construction~\ref{constr:part} described above, reconstruct the covering $C_Y=\{Y_j\}$ of the space $Y$ into a partition $D_Y=\{Y'_k\}_{k\in K}$, $\#K\le\#J\le\#I$, $\diam D_Y\le\diam C_Y$. Let $\s\:K\to I$ be an arbitrary injection. Consider the following correspondence:
$$
R=\Bigl(\bigcup_{k\in K\sm\{k_0\}}(X_{\s(k)}\times Y'_k)\Bigr)\bigcup \Bigl(\bigcup_{i\in I\sm\s(K)}(X_i\times Y'_{k_0})\Bigr),
$$
where $k_0\in K$ is any fixed element. Estimate the distortion of the correspondence $R$. Since elements form distinct $Y'_k$ always correspond in $R$ to elements from distinct $X_i$, then
\begin{multline*}
2d_{GH}(X,Y)\le\dis R\le\max\bigl\{\diam X,\,\diam Y-\a(D_X),\,\diam D_Y\bigr\}\le \\
\le\max\bigl\{\diam Y-d,\,\diam Y-\a(D_X),\,\diam Y-\dl_Y\bigr\}\le\\
\le\diam Y-\min\{d,\a(D),\dl_Y\}.
\end{multline*}
Lemma is proved.
\end{proof}

\begin{dfn}
Let $X,Y\in\cB$ be such that $d_{GH}(X,Y)\ne0$. We say that $Y$ is \emph{hyperextreme with respect to $X$} if
$$
2d_{GH}(X,Y)=\diam Y\ge\diam X,
$$
and that $Y$ is \emph{subextreme with respect to $X$} if
$$
2d_{GH}(X,Y)=\diam Y-\diam X.
$$
\end{dfn}

\begin{rk}
If $Y$ is hyperextreme with respect to $X$, then, due to Assertion~\ref{ass:estim}, the distance $d_{GH}(X,Y)$ between $X$ and $Y$ takes the greatest possible value for the spaces with such diameters. Conversely, if $2d_{GH}(X,Y)=\max\{\diam X,\diam Y\}>0$ and $\diam Y\ge\diam X$, then $Y$ is hyperextreme with respect to $X$.
\end{rk}

\begin{rk}
If $Y$ is subextreme with respect to $X$, then $\diam Y>\diam X$ and, due to Assertion~\ref{ass:estim}, the distance $d_{GH}(X,Y)$ takes the least possible value for the spaces $X$ and $Y$ with such diameters. Conversely, if $2d_{GH}(X,Y)=|\diam X -\diam Y|>0$, then the space with a larger diameter is subextreme with respect to the one with a smaller diameter.
\end{rk}

\begin{rk}
Any metric space $Y\in\cB$, $Y\ne\D_1$, is simultaneously hyperextreme and subextreme with respect to the single-point space $\D_1$. This is the only such case: if $Y$ is simultaneously subextreme and hyperextreme with respect to some $X$, then $2d_{GH}(X,Y)=\diam Y=\diam Y-\diam X$, therefore $\diam X=0$, and hence, $X=\D_1$.
\end{rk}

\begin{dfn}
If
$$
2d_{GH}(X,Y)=\diam Y=\diam X>0,
$$
then $Y$ is hyperextreme with respect to $X$, and $X$ is hyperextreme with respect to $Y$. In this case, we say that the spaces $X$ and $Y$ are \emph{mutually hyperextreme}, and the metric segment $[X,Y]\ss\cB$ is said to be \emph{extreme}.
\end{dfn}

\begin{rk}\label{rk:diam_radius}
Assertion~\ref{ass:estim} implies that the spaces $X$ and $Y$ are mutually hyperextreme if and only if they are ``diametrally opposite'' points of the sphere of radius $\frac12\diam X$ centered at the single-point space $\D_1$, i.e., $X$ and $Y$ are points of this sphere most distant from each other. To the contrary, if $Y$ is subextreme with respect to $X$, then $Y$ is the closest to $X$ point of the sphere of radius $\frac12\diam Y$ centered at $\D_1$, i.e., $X$ and $Y$ ``lie at the same radial ray''.
\end{rk}

\begin{ass}\label{ass:ext_extens}
Let $X,Y\in\cB$, and the metric segment $[X,Y]$ be extreme. Assume that $[X,Y]$ can be extended beyond $Y$ to some $Z$. Then $Z\in\cB$, $\diam Z>\diam Y$, and the space $Z$ is subextreme with respect to the space $Y$.
\end{ass}

\begin{proof}
Indeed, since $Y$ lies between $X$ and $Z$ due to assumptions, then, by definition, $d_{GH}(X,Z)<\infty$, and hence,  $Z\in\cB$ in accordance with Assertion~\ref{ass:estim}. Moreover, $d_{GH}(X,Z)=d_{GH}(X,Y)+d_{GH}(Y,Z)$, and $d_{GH}(Y,Z)>0$ by definition of the extendability, so $d_{GH}(X,Z)> d_{GH}(X,Y)$. On the other hand, if $\diam Z\le\diam Y$, then, in accordance with Assertion~\ref{ass:estim}, we have
$$
2d_{GH}(X,Z)\le\max\{\diam X,\diam Z\}=\diam X,
$$
and $\diam X=2d_{GH}(X,Y)$ because the segment $[X,Y]$ is extreme, and so $d_{GH}(X,Y)=d_{GH}(X,Z)$, a contradiction. Thus, $\diam Z>\diam Y$.

Further, using the assumptions that $Y$ lies between $X$ and $Z$, that the segment $[X,Y]$ is extreme, and Assertion~\ref{ass:estim}, we have:
\begin{multline*}
\max\big\{\diam X,\diam Z\big\}\ge2d_{GH}(X,Z)=2d_{GH}(X,Y)+2d_{GH}(Y,Z)=\\
=\diam X+2d_{GH}(Y,Z)\ge\diam X+\big|\diam Y-\diam Z\big|.
\end{multline*}
However, since $\diam Z\ge\diam Y=\diam X$, then the leftmost part and the rightmost part of this relations chain equal to $\diam Z$, so $2d_{GH}(Y,Z)=\diam Z-\diam Y>0$.
\end{proof}

\begin{lem}\label{lem:hypo_lin_geod}
Let $Y,Z\in\cB$, and $Z$ be subextreme with respect to $Y$. Fix an arbitrary $\e$ from the interval $(0,\diam Z-\diam Y)$ and consider a linear $\e$-shortest curve $R_t$. Then
$$
\diam R_t\ge(1-t)\diam Y+t\,\diam Z-4\e.
$$
\end{lem}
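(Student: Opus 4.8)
The plan is to read $\diam R_t$ directly off the correspondence underlying the linear curve, and to feed in the subextremality of $Z$ at the one crucial step. Recall from Theorem~\ref{thm:inter_metric} that a linear $\e$-shortest curve $R_t$ is built from a correspondence $R\in\cR(Y,Z)$ with $\dis R-2d_{GH}(Y,Z)\le2\e$, and that for $t\in(0,1)$ the space $R_t$ is the set $R$ with $\bigl|(y,z),(y',z')\bigr|_t=(1-t)|yy'|+t|zz'|$. Hence
$$
\diam R_t=\sup_{(y,z),(y',z')\in R}\bigl[(1-t)|yy'|+t|zz'|\bigr].
$$
Since $Z$ is subextreme with respect to $Y$, we have $2d_{GH}(Y,Z)=\diam Z-\diam Y$, so the defining inequality for $R$ becomes $\dis R\le\diam Z-\diam Y+2\e$. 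This is the only place the subextremality hypothesis enters, and it is exactly what makes the estimate nontrivial (merely bounding $2d_{GH}(Y,Z)$ by $\max\{\diam Y,\diam Z\}=\diam Z$ would be useless here).

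First I would choose, for an arbitrary $\eta>0$, a pair $z_1,z_2\in Z$ nearly realizing the diameter of $Z$, that is $|z_1z_2|\ge\diam Z-\eta$, and lift them, using that $R$ is a correspondence, to points $(y_1,z_1),(y_2,z_2)\in R$. The definition of distortion gives $\bigl||y_1y_2|-|z_1z_2|\bigr|\le\dis R$, and I only need the lower bound $|y_1y_2|\ge|z_1z_2|-\dis R$. Estimating the $t$-metric on this single pair from below,
$$
(1-t)|y_1y_2|+t|z_1z_2|\ge(1-t)\bigl(|z_1z_2|-\dis R\bigr)+t|z_1z_2|=|z_1z_2|-(1-t)\dis R.
$$

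Substituting $|z_1z_2|\ge\diam Z-\eta$ and $\dis R\le\diam Z-\diam Y+2\e$, and using $0\le1-t\le1$, the right-hand side is at least $(1-t)\diam Y+t\diam Z-\eta-2\e$. Since this pair lies in $R$, the same bound holds for $\diam R_t$, and letting $\eta\to0$ yields
$$
\diam R_t\ge(1-t)\diam Y+t\diam Z-2\e,
$$
which is in fact slightly stronger than the asserted inequality; the endpoint cases $t=0,1$ follow immediately from $R_0=Y$ and $R_1=Z$.

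The computation is elementary, so there is no serious obstacle; the only things to get right are the bookkeeping of the $\e$'s and the direction of the distortion inequality. The two points to watch are that one wants the lower bound $|y_1y_2|\ge|z_1z_2|-\dis R$ (not the upper one), and that the near-diametral pair must be chosen on the $Z$-side: since $\diam Z$ is the larger diameter and carries the weight $t$ in the direction that survives the cancellation with $(1-t)\dis R$, lifting a near-diametral pair of $Y$ instead would produce a weaker, wrong-shaped estimate.
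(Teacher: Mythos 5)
Your proof is correct and follows essentially the same route as the paper's: choose a near-diametral pair in $Z$, lift it through the correspondence $R$ defining the linear curve, and use subextremality to bound $\dis R$ by $\diam Z-\diam Y+2\e$. Your bookkeeping is in fact slightly cleaner --- by estimating the convex combination $(1-t)|y_1y_2|+t|z_1z_2|$ directly rather than first deriving $|yy'|\ge\diam Y-3\e$, you avoid having to determine the sign of $|zz'|-|yy'|$ (the paper's only use of the hypothesis $\e<\diam Z-\diam Y$) and you obtain the sharper constant $2\e$ in place of $4\e$, which of course still implies the stated bound.
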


\begin{proof}
Let $R\in\cR(Y,Z)$ be an arbitrary correspondence such that $\dis R\le2d_{GH}(Y,Z)+2\e$. Choose  $z,\,z'\in Z$ such that $|zz'|\ge\diam Z-\e$, and hence, $|zz'|>\diam Y$. Then for any $y\in R^{-1}(z)$ and $y'\in R^{-1}(z')$ we have:
$$
\big||zz'|-|yy'|\big|\le\dis R\le2d_{GH}(Y,Z)+2\e=\diam Z-\diam Y+2\e,
$$
where the latter equality holds because $Z$ is subextreme with respect to $Y$. On the other hand,  $\big||zz'|-|yy'|\big|=|zz'|-|yy'|\ge\diam Z-\e-|yy'|$ due to the choice of $z,\,z'$ and $\e$, therefore,
$$
\diam Z-\diam Y+2\e\ge\diam Z-\e-|yy'|,
$$
and hence, $\diam Y-|yy'|\le3\e$. Thus,
\begin{multline*}
\diam R_t\ge\big|(y,z),(y',z')\big|_t=(1-t)\,|yy'|+t\,|zz'|\ge\\
\ge(1-t)(\diam Y-3\e)+t\,(\diam Z-\e)\ge\\
\ge(1-t)\diam Y+t\,\diam Z-4\e.
\end{multline*}
Lemma is proved.
\end{proof}

\begin{thm}\label{thm:ext_extens}
Let $X,Y\in\cB$, $m$ and $n$ be cardinal numbers such that $1<n\le\#X$, $1<m\le\#Y$. Suppose that the metric segment $[X,Y]$ is extreme and the following conditions hold\/\rom:
\begin{enumerate}
\item There exists a partition $D_X\in\cD_n(X)$ such that $\a(D_X)>0$\rom;
\item There exists a covering $C_Y\in\cC_m(Y)$, such that $\diam C_Y<\diam Y$\rom;
\item $m\le n$.
\end{enumerate}
Then the metric segment $[X,Y]$ can not be extended beyond $Y$.
\end{thm}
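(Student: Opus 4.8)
The plan is to argue by contradiction, reducing the extendability hypothesis to a configuration where Lemma~\ref{lem:dist_part} forces a strict inequality that the extremeness data already contradict. So suppose that $[X,Y]$ can be extended beyond $Y$ to some $Z$. First I would invoke Assertion~\ref{ass:ext_extens}: since $[X,Y]$ is extreme, the witness $Z$ must satisfy $Z\in\cB$, $\diam Z>\diam Y$, and $Z$ is subextreme with respect to $Y$, that is, $2d_{GH}(Y,Z)=\diam Z-\diam Y$. I would also record that $Y$ lies between $X$ and $Z$, so that the Gromov--Hausdorff distance is additive along the segment, $d_{GH}(X,Z)=d_{GH}(X,Y)+d_{GH}(Y,Z)$.

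The key computation combines extremeness of $[X,Y]$, which gives $2d_{GH}(X,Y)=\diam X=\diam Y$, with the subextremeness of $Z$ just obtained, to yield
$$
2d_{GH}(X,Z)=2d_{GH}(X,Y)+2d_{GH}(Y,Z)=\diam X+(\diam Z-\diam Y)=\diam Z,
$$
using $\diam X=\diam Y$. Hence $\diam Z-2d_{GH}(X,Z)=0$. The whole argument now hinges on showing that the four hypotheses of Lemma~\ref{lem:dist_part} are satisfied for the pair $(X,Z)$ (with $Z$ playing the role of ``$Y$''), since that lemma would then force $\diam Z-2d_{GH}(X,Z)>0$, the desired contradiction.

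To verify the hypotheses of Lemma~\ref{lem:dist_part} for $(X,Z)$ I would proceed condition by condition. Condition~(1), $\diam Z-\diam X>0$, holds because $\diam Z>\diam Y=\diam X$. Condition~(2) is exactly hypothesis~(1) of the theorem: the partition $D_X\in\cD_n(X)$ with $\a(D_X)>0$, so $\#I=n$. For condition~(3) I would transport the covering $C_Y\in\cC_m(Y)$ of hypothesis~(2) to a covering $C_Z=\{Z_j\}_{j\in J}$ of $Z$ by means of Lemma~\ref{lem:near_covered_non_bound} (applicable since $Y,Z\in\cB$ give $d_{GH}(Y,Z)<\infty$); this preserves the index set, so $\#J=m$, and yields $\diam C_Z\le\diam C_Y+2d_{GH}(Y,Z)$. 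Combining $\diam C_Y<\diam Y$ with $2d_{GH}(Y,Z)=\diam Z-\diam Y$ gives $\diam C_Z<\diam Z$, so $\dl_Z:=\diam Z-\diam C_Z>0$. Finally condition~(4), $\#J=m\le n=\#I$, is hypothesis~(3).

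With all four conditions in hand, Lemma~\ref{lem:dist_part} delivers $\diam Z-2d_{GH}(X,Z)\ge\min\{\diam Z-\diam X,\a(D_X),\dl_Z\}>0$, which contradicts the identity $\diam Z-2d_{GH}(X,Z)=0$ established above; this contradiction shows $[X,Y]$ cannot be extended beyond $Y$. I expect the only genuinely delicate point to be the covering transfer of the third step: one must be sure that the transported covering of $Z$ still has diameter strictly below $\diam Z$ (which is where the precise subextremeness value $2d_{GH}(Y,Z)=\diam Z-\diam Y$ is consumed) and that its cardinality does not exceed that of the partition $D_X$. Every other ingredient is either an immediate consequence of Assertion~\ref{ass:ext_extens} or a direct restatement of the theorem's hypotheses.
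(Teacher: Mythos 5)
Your proof is correct, but it is genuinely more direct than the paper's. The paper also reduces to Assertion~\ref{ass:ext_extens} and Lemma~\ref{lem:dist_part}, but it does not apply Lemma~\ref{lem:dist_part} to the pair $(X,Z)$: instead it takes a linear $\e$-shortest curve $R_t$ from $Y$ to $Z$, picks a point $R_s$ very close to $Y$ (with $s$ squeezed between $8\e/d$ and $\dl/(10(d+\e))$ via the auxiliary Lemma~\ref{lem:quadr_ineq}), transfers the covering $C_Y$ to $R_s$ by the bounded-case Lemma~\ref{lem:near_covered}, bounds $\diam R_s$ from below by Lemma~\ref{lem:hypo_lin_geod}, and then plays the upper bound from Lemma~\ref{lem:dist_part} against a lower bound from Corollary~\ref{cor:between}, with all inequalities carrying a $4\e$ slack. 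Your argument bypasses the entire $\e$-shortest-curve apparatus by observing that betweenness plus extremeness plus subextremeness give the exact identity $2d_{GH}(X,Z)=\diam Z$, and that the covering can be pushed all the way to $Z$ because subextremeness provides precisely the budget $2d_{GH}(Y,Z)=\diam Z-\diam Y$ while $\diam C_Y<\diam Y$ leaves a strictly positive margin $\diam Y-\diam C_Y$. The one point to be careful about is that the proof of Lemma~\ref{lem:near_covered_non_bound} actually only yields, for each $\e>d_{GH}(Y,Z)$, a covering with $\diam C_Z\le\diam C_Y+2\e$ (the infimum over correspondences need not be attained); but your margin $\diam Y-\diam C_Y>0$ absorbs this, since any $\e$ with $2\e<2d_{GH}(Y,Z)+(\diam Y-\diam C_Y)$ still gives $\diam C_Z<\diam Z$. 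So your route is shorter and uses fewer ingredients; what the paper's detour through $R_s$ buys is that it only ever needs covering transfer over an arbitrarily small distance, which is the more robust move when one cannot control $2d_{GH}(Y,Z)$ exactly, but here that control is exactly what subextremeness supplies.
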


\begin{proof}
To the contrary assume that there exist a metric space $Z$ such that $Y$ lies between $X$ and $Z$. Due to Assertion~\ref{ass:ext_extens}, in this case $Z\in\cB$ and $Z$ is subextreme with respect to $Y$. The following Lemma is evident.

\begin{lem}\label{lem:quadr_ineq}
For any $\dl>0$ and $d>0$ there exists an $\e_0>0$ such that the inequality
\begin{equation}\label{eq:eddl}
\frac{8\e}{d}<\frac{\dl}{10(d+\e)}
\end{equation}
holds for all $\e\in[0,\e_0)$.
\end{lem}

Under notations of Lemma~\ref{lem:quadr_ineq}, choose the corresponding $\e_0$ for $\dl=\diam Y-\diam C_Y$ and $d=d_{GH}(Y,Z)=\diam Z-\diam Y>0$. Fix an arbitrary $\e$ such that
$$
0<\e<\min\biggl\{\e_0,\,\frac d8,\,\frac\dl{20},\,\frac{\a(D_X)}4\biggr\},
$$
then, due to Lemma~\ref{lem:quadr_ineq}, Inequality~(\ref{eq:eddl}) holds. Since $\e<d/8$, then the left hand side of Inequality~(\ref{eq:eddl}) is less than $1$, therefore there exists an $s\in(0,1)$ such that
$$
\frac{8\e}{d}<s<\frac{\dl}{10(d+\e)}.
$$

Consider any $R\in\cR(Y,Z)$ with $\dis R\le2d+2\e$ and the corresponding linear $\e$-shortest curve $R_t$, $t\in[0,1]$, connecting $Y$ and $Z$, where $R_0=Y$, $R_1=Z$. For the $s$ chosen above, consider the space $R_s$ and by reasoning similar to the proof of Theorem~\ref{thm:inter_metric}, we get that $2d_{GH}(Y,R_s)\le s\dis R$, and so,
$$
d_{GH}(Y,R_s)\le s(2d+2\e)<\frac{\dl}{10(d+\e)}(2d+2\e)=\dl/5.
$$
Due to Lemma~\ref{lem:near_covered}, there exists a covering $C_R$ of the space $R_s$ by non-empty subsets of less diameter, such that $\#C_R=\#C_Y$ and $\diam R_s-\diam C_R>\dl/5$. Since $\diam Y<\diam Z$ and $8\e<s\,d$, than, by Lemma~\ref{lem:hypo_lin_geod}, we get
$$
\diam R_s\ge\diam Y+s(\diam Z-\diam Y)-4\e=\diam Y+s\,d-4\e>\diam Y+4\e,
$$
and so,  because $\diam X=\diam Y$, we have $\diam R_s-\diam X>4\e>0$. The latter, together with the above estimates,  imply that the pair $X$ and $R_s$ satisfies all conditions of Lemma~\ref{lem:dist_part}, and so,
\begin{multline*}
2d_{GH}(X,R_s)\le\\
\le\diam R_s-\min\big\{\diam R_s-\diam X,\,\a(D),\,\diam R_s-\diam C_R\big\}.
\end{multline*}
However, each of the three expressions standing under the minimum in the previous formula are strictly greater than $4\e$, and hence, $2d_{GH}(X,R_s)<\diam R_s-4\e$.

On the other hand, $2d_{GH}(Y,R_s)\ge\big|\diam Y-\diam R_s|=\diam R_s-\diam Y$, and so, applying Corollary~\ref{cor:between} and taking into account the condition $2d_{GH}(X,Y)=\diam Y$, we obtain that
\begin{multline*}
2d_{GH}(X,R_s)\ge2d_{GH}(X,Y)+2d_{GH}(Y,R_s)-4\e\ge\\
\ge2d_{GH}(X,Y)+\diam R_s-\diam Y-4\e=\diam R_s-4\e,
\end{multline*}
a contradiction. Theorem is proved.
\end{proof}

\begin{rk}
One might get the impression that the statement of Theorem~\ref{thm:ext_extens} can be weaken by changing the mutual hyperextremality of the space $X$ and $Y$ by hyperextremality of $Y$ with respect to $X$. But if $Y$ is hyperextreme with respect to $X$ and Conditions~$(1)$--$(3)$ of Theorem~\ref{thm:ext_extens} are valid, then the spaces $X$ and $Y$ are mutually extreme. Indeed, if  $\diam Y>\diam X$, then Lemma~\ref{lem:dist_part} implies the inequality $2d_{GH}(X,Y)<\diam Y$ that contradicts to hyperextremality of $Y$ with respect to $X$, and hence only the case $2d_{GH}(X,Y)=\diam Y=\diam X$ is possible, but it is the case of mutual extremality.
\end{rk}

\begin{cor}\label{cor:ext_ext}
Let $X,\,Y\in\cM$, the spaces $X$ and $Y$ be mutual hyperextreme, and Conditions $(1)-(3)$ of Theorem~$\ref{thm:ext_extens}$ hold. Then any shortest curve in $\cM$ connecting $X$ and $Y$ can not be extended beyond $Y$.
\end{cor}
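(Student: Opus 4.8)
The plan is to deduce the statement directly from Theorem~\ref{thm:ext_extens}, using only that $\cM$ is geodesic and that every point lying on a shortest curve between two spaces belongs to the metric segment joining them. First I would record that the hypotheses of Theorem~\ref{thm:ext_extens} are in force: since $X$ and $Y$ are mutually hyperextreme, the segment $[X,Y]$ is extreme by the very definition of an extreme segment, and Conditions $(1)$--$(3)$ are assumed. Hence that theorem applies and tells us that $[X,Y]$ cannot be extended beyond $Y$; that is, there is no $Z\in\cB$ with $Y\in[X,Z]$ and $d_{GH}(Y,Z)>0$.

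Next I would translate the curve-theoretic conclusion into the language of metric segments. Suppose, to the contrary, that some shortest curve $\g$ in $\cM$ joining $X$ and $Y$ admits an extension beyond $Y$, i.e.\ that there is a space $Z\in\cM$ with $d_{GH}(Y,Z)>0$ and a shortest curve $\g'$ in $\cM$ joining $X$ and $Z$ that contains $\g$ as an initial arc, with $Y$ an interior point of $\g'$. The key observation is that $Y$ then lies on the shortest curve $\g'$ between $X$ and $Z$, so by additivity of length and the triangle inequality $d_{GH}(X,Z)\le d_{GH}(X,Y)+d_{GH}(Y,Z)$ one forces $d_{GH}(X,Y)+d_{GH}(Y,Z)=|\g'|=d_{GH}(X,Z)$; hence $Y$ lies between $X$ and $Z$, i.e.\ $Y\in[X,Z]$. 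Since also $Z\in\cM\ss\cB$ and $d_{GH}(Y,Z)>0$, this exhibits an extension of the metric segment $[X,Y]$ beyond $Y$, contradicting the conclusion of the previous paragraph. Therefore no shortest curve joining $X$ and $Y$ can be extended beyond $Y$.

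The argument is essentially a reduction, with all the genuine work carried out in Theorem~\ref{thm:ext_extens}; the only point requiring care is the passage from ``a shortest curve extends beyond $Y$'' to ``$Y\in[X,Z]$ with $d_{GH}(Y,Z)>0$.'' I expect this to be the main (though modest) obstacle: one must make precise what an extension of a shortest curve means and verify that an interior point of a shortest curve splits its length additively, which is exactly the additivity of length together with the triangle inequality noted above. Note also that $\cM$ being geodesic guarantees that shortest curves joining $X$ and $Y$ exist, so the statement is not vacuous; and since Theorem~\ref{thm:ext_extens} forbids extension even within the larger class $\cB$, it in particular forbids any extending space $Z$ lying in $\cM$.
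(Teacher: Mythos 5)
Your argument is correct and is exactly the deduction the paper intends: the corollary is stated without proof as an immediate consequence of Theorem~\ref{thm:ext_extens}, via the observation (made explicit in the paper's remark on geodesic spaces) that an interior point of a shortest curve lies in the metric segment between its endpoints. Your careful verification that additivity of length plus the triangle inequality forces $Y\in[X,Z]$ with $d_{GH}(Y,Z)>0$ is precisely the missing link the paper leaves to the reader.
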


%%%%%%%%%%%%%%%%%%%%%%%%%%%%%%
\section{Some Examples}\label{sec:examples}
\markright{\thesection.~Some Examples}
%%%%%%%%%%%%%%%%%%%%%%%%%%%%%%
\noindent Above, by $\D_1$ we denoted the single-point metric space, and $\l\D_n$ was the metric space of cardinality $n$, all whose non-zero distances are equal to $\l$. Such spaces are called \emph{one distance spaces\/} or \emph{simplexes\/} for brevity. Some formulas for calculating the Gromov--Hausdorff distances from the simplexes to bounded metric spaces can be found in~\cite{GrigIT_Sympl}. We use these results to construct examples of metric segments that can be extended.

%%%%%%%%%%%%%%%%%%%%%%%%%%%%%%
\subsection{Extendability beyond a Simplex}
%%%%%%%%%%%%%%%%%%%%%%%%%%%%%%
\noindent Let $X$ be a metric space and $m$ a cardinal number that does not exceed $\#X$. Recall that we have already defined several characteristics of possible partitions of a metric space $X$ into $m$ subsets.

\begin{thm}[\cite{GrigIT_Sympl}]\label{thm:dist-n-simplex-same-dim-alpha0}
For $X\in\cB$ and a cardinal number $1<m\le\#X$ the equality
$$
2d_{GH}(\l\D_m,X)=\inf_{D\in\cD_m}\max\bigl\{\diam D,\,\l-\a(D),\,\diam X-\l\bigr\}
$$
is valid.
\end{thm}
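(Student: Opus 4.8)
The goal is to establish the equality
$$
2d_{GH}(\l\D_m,X)=\inf_{D\in\cD_m}\max\bigl\{\diam D,\,\l-\a(D),\,\diam X-\l\bigr\}
$$
for $X\in\cB$ and $1<m\le\#X$. Since this is a two-sided infimum identity, the plan is to prove the two inequalities separately: an upper bound $\le$ by constructing, for each partition $D\in\cD_m(X)$, an explicit correspondence whose distortion is controlled by the three quantities under the maximum; and a lower bound $\ge$ by showing that an \emph{arbitrary} correspondence $R\in\cR(\l\D_m,X)$ forces one of the three terms to be small, after extracting a suitable partition of $X$ from $R$.

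For the upper bound I would fix a partition $D=\{X_i\}_{i=1}^m$ of $X$ and let $\{s_i\}_{i=1}^m$ be the points of the simplex $\l\D_m$. Define the correspondence $R=\bigcup_i(\{s_i\}\x X_i)$; this is a correspondence because the $X_i$ are non-empty and cover $X$, and the projections are clearly surjective. To bound $\dis R$ I would compare $\bigl||s_is_j|-|xx'|\bigr|$ over all pairs: when $i=j$ the simplex distance is $0$ and $|xx'|\le\diam X_i\le\diam D$; when $i\ne j$ the simplex distance is $\l$ and $\l-|xx'|\le\l-\a(D)$ while $|xx'|-\l\le\diam X-\l$. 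Taking the supremum gives $\dis R\le\max\{\diam D,\,\l-\a(D),\,\diam X-\l\}$, and by Theorem~\ref{th:GH-metri-and-relations} this yields $2d_{GH}(\l\D_m,X)\le\max\{\diam D,\,\l-\a(D),\,\diam X-\l\}$; taking the infimum over $D$ finishes this half.

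For the lower bound I would start from an arbitrary $R\in\cR(\l\D_m,X)$, set $X_i=R(s_i)=\{x\in X:(s_i,x)\in R\}$, and note $\{X_i\}$ is a (possibly overlapping) covering of $X$ by non-empty sets. The key observation is that $\dis R$ simultaneously dominates each of $\diam X_i$ (from the $i=i$ pairs with simplex distance $0$), the deficit $\l-|X_iX_j|$ for $i\ne j$, and $\diam X-\l$ (from points lying in distinct blocks whose mutual distance approaches $\diam X$). The main obstacle is the passage from this \emph{covering} to an honest \emph{partition} $D\in\cD_m(X)$ without increasing the relevant quantities: one must split the overlaps (Construction~\ref{constr:part} gives exactly such a disjointification with $\diam D\le\diam C$ and non-increasing cardinality) while checking that $\a(D)$ does not drop below what $\dis R$ controls and that the block count stays exactly $m$. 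I expect the careful bookkeeping here — particularly verifying $\l-\a(D)\le\dis R$ survives the disjointification and that no block becomes empty so the partition genuinely lies in $\cD_m(X)$ — to be the delicate step; once the partition is in hand, $\max\{\diam D,\l-\a(D),\diam X-\l\}\le\dis R$ gives $\inf_D\max\{\cdots\}\le\dis R$, and taking the infimum over $R$ together with Theorem~\ref{th:GH-metri-and-relations} completes the proof.
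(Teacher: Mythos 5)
The paper does not actually prove this theorem: it is imported verbatim from~\cite{GrigIT_Sympl}, so there is no in-text argument to compare yours against. Judged on its own, your plan is the standard and correct one for identities of this type (matching a correspondence to each partition for the upper bound, extracting a partition from an arbitrary correspondence for the lower bound), and the upper-bound half is already complete as you sketched it: for $R=\bigcup_i\{s_i\}\x X_i$ every pair contributes at most $\diam D$ (same block) or at most $\max\{\l-\a(D),\diam X-\l\}$ (distinct blocks, according to whether $|xx'|\le\l$ or $|xx'|>\l$), so $\dis R\le\max\{\diam D,\l-\a(D),\diam X-\l\}$ and Theorem~\ref{th:GH-metri-and-relations} gives the inequality $\le$.

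The two points you flag in the lower bound as ``delicate'' do go through, and since you left them open let me record why. First, the disjointification of Construction~\ref{constr:part} replaces each $X_i=R(s_i)$ by a subset $X_i'\ss X_i$, and the quantity $|X_i'X_j'|$ is an infimum over a smaller set of pairs, so it can only increase: $\a(D)\ge\a(C)$, hence $\l-\a(D)\le\l-\a(C)\le\dis R$ survives the passage (here $\l-\a(C)\le\dis R$ because any $x\in X_i$, $x'\in X_j$ with $i\ne j$ satisfy $|\l-|xx'||\le\dis R$). Second, the block count can genuinely drop below $m$: some $X_i'$ is empty exactly when $X_i$ is contained in the union of earlier blocks, i.e.\ when the covering overlaps; but an overlap means some $x$ lies in $R(s_i)\cap R(s_j)$ with $i\ne j$, forcing $\dis R\ge|\,\l-|xx|\,|=\l$. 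In that degenerate case one refines the resulting partition arbitrarily into exactly $m$ non-empty parts (possible since $m\le\#X$); refining does not increase $\diam D$, and although $\a(D)$ may collapse to $0$, the bound $\l-\a(D)\le\l\le\dis R$ holds regardless, while $\diam X-\l\le\dis R$ was established independently (note your parenthetical ``from points lying in distinct blocks'' is slightly off --- the near-diametral pair may lie in one block, but then $|xx'|\le\dis R\le\dis R+\l$ gives the same conclusion). With both cases settled, $\inf_D\max\{\cdots\}\le\dis R$ for every $R$, and Theorem~\ref{th:GH-metri-and-relations} finishes the proof. So your plan is sound; only this case analysis needed to be written out.
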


Here we need  a particular case of this formula.

\begin{cor}\label{cor:formula_simplex}
For $X\in\cB$, a cardinal number $1<m\le\#X$, and $\l\ge\diam X+\a_m(X)$ the equality $2d_{GH}(\l\D_m,X)=\l-\a_m(X)$ holds.
\end{cor}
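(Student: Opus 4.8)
The plan is to specialize Theorem~\ref{thm:dist-n-simplex-same-dim-alpha0} and show that, under the hypothesis $\l\ge\diam X+\a_m(X)$, the middle term $\l-\a(D)$ dominates the maximum for every partition $D\in\cD_m(X)$, so that the infimum collapses to $\l-\sup_D\a(D)=\l-\a_m(X)$.

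First I would record two elementary bounds valid for every $D=\{X_i\}_{i\in I}\in\cD_m(X)$. Since each $X_i\ss X$, we have $\diam D=\sup_i\diam X_i\le\diam X$. Since $m>1$, for distinct indices $i,j$ any $a\in X_i$, $b\in X_j$ satisfy $|ab|\le\diam X$, whence $|X_iX_j|\le\diam X$ and therefore $\a(D)\le\diam X$; passing to the supremum gives $\a_m(X)\le\diam X$.

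Next I would dispose of the outer two terms. The third term is non-positive: using the hypothesis, $\diam X-\l\le\diam X-(\diam X+\a_m(X))=-\a_m(X)\le0\le\diam D$, so it never attains the maximum. For the remaining comparison, the hypothesis together with $\a(D)\le\a_m(X)$ gives
$$
\l-\a(D)\ge\l-\a_m(X)\ge\diam X\ge\diam D,
$$
so that $\max\bigl\{\diam D,\,\l-\a(D),\,\diam X-\l\bigr\}=\l-\a(D)$ for every $D$.

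Taking the infimum over $\cD_m(X)$ then yields
$$
2d_{GH}(\l\D_m,X)=\inf_{D\in\cD_m(X)}\bigl(\l-\a(D)\bigr)=\l-\sup_{D\in\cD_m(X)}\a(D)=\l-\a_m(X),
$$
which is the claim. There is essentially no obstacle here beyond bookkeeping; the only points deserving care are the inequality $\a_m(X)\le\diam X$ (which makes $\l-\a_m(X)\ge\diam X$ and hence forces the middle term to dominate the first), and the observation that the hypothesis $\l\ge\diam X+\a_m(X)$ is exactly what is needed to neutralize the term $\diam X-\l$.
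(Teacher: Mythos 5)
Your proof is correct and follows essentially the same route as the paper: specialize Theorem~\ref{thm:dist-n-simplex-same-dim-alpha0}, use the hypothesis to get the chain $\l-\a(D)\ge\l-\a_m(X)\ge\diam X\ge\diam D$ together with $\l-\a(D)\ge\diam X-\l$, and pass the infimum inside. (Minor aside: $\l-\a_m(X)\ge\diam X$ comes straight from the hypothesis, not from $\a_m(X)\le\diam X$, but this does not affect the argument.)
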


\begin{proof}
Indeed, if $\l\ge\diam X+\a_m(X)$, then $\l-\a(D)\ge\l-\a_m(X)\ge\diam X\ge\diam D$ and hence, $\l-\a(D)\ge\diam X-\l$. Therefore, for such $\l$ the maximum in the formula from Theorem~\ref{thm:dist-n-simplex-same-dim-alpha0} equals $\l-\a(D)$, so $2d_{GH}(\l\D_m,X)=\inf_D(\l-\a(D))=\l-\sup_D\a(D)=\l-\a_m(X)$. Corollary is proved.
\end{proof}

\begin{cor}\label{cor:extendable}
For $X\in\cB$, a cardinal number $1<m\le\#X$, and $\l\ge\diam X+\a_m(X)$ the metric segment $[X,\l\D_m]$ can be extended beyond $\l\D_m$ to any simplex $\l'\D_m$, where $\l'>\l$.
\end{cor}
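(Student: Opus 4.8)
The plan is to verify directly, from the definition of extendability, that $\l\D_m$ lies between $X$ and $\l'\D_m$ and that the two simplexes are at positive distance. Writing $x=X$, $y=\l\D_m$, $z=\l'\D_m$, it suffices to establish the additivity relation $d_{GH}(X,\l\D_m)+d_{GH}(\l\D_m,\l'\D_m)=d_{GH}(X,\l'\D_m)$ together with $d_{GH}(\l\D_m,\l'\D_m)>0$; the first shows $y\in[x,z]$, and the second that $|yz|>0$, which is exactly what the definition of extendability beyond $y$ requires. Since $\l'\D_m\in\cB$, the extension point lies in the ambient class, as needed.

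First I would compute the two distances from $X$ to the simplexes. Because $\l\ge\diam X+\a_m(X)$ and $\l'>\l\ge\diam X+\a_m(X)$, Corollary~\ref{cor:formula_simplex} applies to both and gives $2d_{GH}(X,\l\D_m)=\l-\a_m(X)$ and $2d_{GH}(X,\l'\D_m)=\l'-\a_m(X)$.

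The one genuinely new computation is $d_{GH}(\l\D_m,\l'\D_m)$. Here I would apply Theorem~\ref{thm:dist-n-simplex-same-dim-alpha0} with $\l'\D_m$ in the role of the simplex and $\l\D_m$ in the role of $X$ (its cardinality is exactly $m$, so $1<m\le\#(\l\D_m)$ holds, and $\diam(\l\D_m)=\l$). For every partition $D\in\cD_m(\l\D_m)$ one has $\a(D)=\l$, since points in distinct parts are distinct points of $\l\D_m$ and hence at distance $\l$. Taking $D$ to be the partition into $m$ singletons (which exists as $\#(\l\D_m)=m$) gives $\diam D=0$, so the theorem yields
$$
2d_{GH}(\l'\D_m,\l\D_m)=\max\{0,\,\l'-\l,\,\l-\l'\}=\l'-\l.
$$
I would note that any partition containing a non-singleton part has $\diam D=\l$ and so cannot decrease the maximum below $\l'-\l$; thus the singleton partition realizes the infimum for every $m$, finite or infinite.

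Combining the three values gives $2d_{GH}(X,\l\D_m)+2d_{GH}(\l\D_m,\l'\D_m)=(\l-\a_m(X))+(\l'-\l)=\l'-\a_m(X)=2d_{GH}(X,\l'\D_m)$, i.e.\ the desired additivity, so $\l\D_m\in[X,\l'\D_m]$. Since $\l'>\l$, we have $d_{GH}(\l\D_m,\l'\D_m)=(\l'-\l)/2>0$, so the extension is non-degenerate, completing the argument. I do not expect a serious obstacle: the only point requiring care is the distance $d_{GH}(\l\D_m,\l'\D_m)$ when $m$ is infinite, where one must check that admitting coarser partitions into $m$ parts cannot beat the singleton partition — and it cannot, precisely because a non-singleton part forces $\diam D=\l$.
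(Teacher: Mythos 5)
Your proposal is correct and follows essentially the same route as the paper: both arguments compute $2d_{GH}(X,\l\D_m)=\l-\a_m(X)$ and $2d_{GH}(X,\l'\D_m)=\l'-\a_m(X)$ via Corollary~\ref{cor:formula_simplex} and combine these with $2d_{GH}(\l\D_m,\l'\D_m)=\l'-\l$ to get the additivity that places $\l\D_m$ between $X$ and $\l'\D_m$. The only difference is that you derive the distance between the two simplexes from Theorem~\ref{thm:dist-n-simplex-same-dim-alpha0} (correctly handling the infinite-cardinality case), whereas the paper simply asserts this standard fact.
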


\begin{proof}
Indeed, since $2d_{GH}(\l\D_m,\l'\D_m)=|\l-\l'|$ and in accordance with Corollary~\ref{cor:formula_simplex}, for any $\l\ge\diam X+\a_m(X)$ the equality $2d_{GH}(\l\D_m,X)=\l-\a_m(X)$ holds.  Therefore, the simplex $\l\D_m$ lies between $X$ and $\l'\D_m$, where $\l'>\l$, as required.
\end{proof}

%%%%%%%%%%%%%%%%%%%%%%%%%%%%%%
\subsection{Extendability beyond a Subextreme Space}
%%%%%%%%%%%%%%%%%%%%%%%%%%%%%%
\noindent The next Assertion formalizes Remark~\ref{rk:diam_radius}.

\begin{ass}\label{ass:hypo_between}
Let $X,Y\in\cB$, and $Y$ be subextreme with respect to $X$. Then $X$ lies between $\D_1$ and $Y$.
\end{ass}

\begin{proof}
Indeed,
\begin{multline*}
2d_{GH}(\D_1,Y)=\diam Y=\diam X +\big(\diam Y-\diam X\big)=\\
=2d_{GH}(\D_1,X)+2d_{GH}(X,Y),
\end{multline*}
that is required.
\end{proof}

\begin{cor}\label{cor:hypo_exten}
Let $X,Y\in\cB$, and $Y$ be subextreme with respect to $X$, and $X\ne\D_1$. Then the metric segment $[X,Y]$ is extendable as beyond $X$, so as beyond $Y$.
\end{cor}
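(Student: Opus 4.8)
The plan is to handle the two extensions separately: the extension beyond $X$ is almost immediate from the preceding results, while the extension beyond $Y$ requires constructing an explicit continuation space.

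For the extension beyond $X$, I would invoke Assertion~\ref{ass:hypo_between}, which states precisely that $X$ lies between $\D_1$ and $Y$. Thus $\D_1$ is the natural candidate endpoint: the metric segment $[\D_1,Y]$ contains $X$, so it only remains to verify $d_{GH}(\D_1,X)>0$. By Assertion~\ref{ass:estim} we have $2d_{GH}(\D_1,X)=\diam X$, and since $X\ne\D_1$ the space $X$ has at least two points, hence $\diam X>0$. Therefore $\D_1$ witnesses extendability of $[X,Y]$ beyond $X$.

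For the extension beyond $Y$, I would produce a space $Z$ of strictly larger diameter lying on the ``same radial ray'' in the sense of Remark~\ref{rk:diam_radius}. Concretely, fix any constant $c>0$ and let $Z$ be the underlying set of $Y$ equipped with $|ab|_Z=|ab|_Y+c$ for all distinct $a,b$ and $|aa|_Z=0$. Adding a positive constant to all nonzero distances preserves the triangle inequality (indeed $|ab|_Z=|ab|_Y+c\le|ac|_Y+|cb|_Y+c\le|ac|_Z+|cb|_Z$), so $Z$ is a genuine bounded metric space with $\diam Z=\diam Y+c$; in particular $Z\in\cB$. Using the identity correspondence between $Y$ and $Z$, whose distortion equals $c$, together with the diameter estimate $|\diam Y-\diam Z|\le 2d_{GH}(Y,Z)$ from Assertion~\ref{ass:estim}, I obtain $2d_{GH}(Y,Z)=c$, so in particular $d_{GH}(Y,Z)>0$.

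It then remains to check that $Y$ lies between $X$ and $Z$, which is the crux. Here I would sandwich $d_{GH}(X,Z)$ between the triangle inequality from above and the diameter bound from below. On one hand, using subextremality of $Y$ (so $2d_{GH}(X,Y)=\diam Y-\diam X$) and the value just computed, $2d_{GH}(X,Z)\le 2d_{GH}(X,Y)+2d_{GH}(Y,Z)=(\diam Y-\diam X)+c=\diam Z-\diam X$. On the other hand, since $\diam Z=\diam Y+c>\diam Y>\diam X$, Assertion~\ref{ass:estim} gives $2d_{GH}(X,Z)\ge\diam Z-\diam X$. Hence equality holds, and rearranging yields $d_{GH}(X,Z)=d_{GH}(X,Y)+d_{GH}(Y,Z)$; that is, $Y$ lies between $X$ and $Z$. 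Combined with $d_{GH}(Y,Z)>0$, this exhibits $Z$ as the required extension beyond $Y$. The only genuine obstacle is guessing the continuation space $Z$; once the ``add a constant'' construction is chosen, both the computation of $d_{GH}(Y,Z)$ and the collinearity identity reduce to the elementary sandwich arguments above, with Assertion~\ref{ass:estim} supplying the matching lower bounds.
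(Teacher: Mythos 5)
Your proof is correct. The paper states this corollary without any proof, immediately after Assertion~\ref{ass:hypo_between}, so the only part with an evident ``intended'' argument is the extension beyond $X$: there you do exactly what the paper intends, taking $Z=\D_1$ and using $2d_{GH}(\D_1,X)=\diam X>0$. For the extension beyond $Y$ the paper's own explicit machinery appears only later (Construction~\ref{constr:deux_point_extt} and Assertion~\ref{ass:hypo_exten_deux}) and requires the extra hypothesis that $\diam Y$ is attained at some pair $y_1,y_2$, which Corollary~\ref{cor:hypo_exten} does not assume; your ``add a constant $c$ to all nonzero distances'' space $Z$ (or, equally well, the dilation $\lambda Y$ with $\lambda>1$) fills this gap in full generality. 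The verification is the standard one the paper uses elsewhere (e.g.\ in Assertion~\ref{ass:hypo_exten_deux}): the identity correspondence gives $2d_{GH}(Y,Z)\le c$, the diameter bound of Assertion~\ref{ass:estim} gives the matching lower bound, and the same sandwich between the triangle inequality and $2d_{GH}(X,Z)\ge\diam Z-\diam X$ yields the betweenness of $Y$. In short, your $Z$ is precisely a space subextreme with respect to $Y$ of strictly larger diameter, which by the additivity of the quantity $\diam(\cdot)-\diam X$ along the ``radial ray'' of Remark~\ref{rk:diam_radius} forces $Y\in[X,Z]$; the only cosmetic blemish is the reuse of the letter $c$ both for the added constant and for the intermediate point in the triangle inequality check.
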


\begin{constr}\label{constr:deux_point_extt}
Let $Y\in\cB$ and there exist points $y_1,y_2\in Y$ such that $\diam Y=|y_1y_2|$. For arbitrary $r_1\ge0$ and $r_2\ge0$, construct a \emph{two-point extension $Z_{r_1, r_2}(y_1, y_2)$ of the space $Y$} as follows. Put $Z=Z_{r_1,r_2}(y_1,y_2)=Y\sqcup\{z_1,z_2\}$ and extend the metric from $Y$ to $Z$ by the next rule: $|z_1z_2|=r_1+|y_1y_2|+r_2$, and for $y\in Y$ put $|yz_i|=r_i+|yy_i|$, $i=1,2$. If $r_i=0$, then identify $z_i$ with $y_i$. It is easy to verify that $Z_{r_1,r_2}(y_1,y_2)$ is a metric space for any $r_1\ge0$ and $r_2\ge0$, and
$$
\diam Z_{r_1, r_2}(y_1,y_2)=\diam Y+r_1+r_2=|z_1z_2|.
$$
The spaces
$$
Z_{r_1}(y_1):=Z_{r_1,0}(y_1,y_2)\qquad \text{and}\qquad Z_{r_2}(y_2):=Z_{0,r_2}(y_1, y_2)
$$
we call a \emph{single-point extensions of the space $Y$}. Clear that
$$
Z_{0,0}(y_1, y_2)=Z_{0}(y_1)=Z_{0}(y_2)=Y.
$$
\end{constr}

\begin{lem}\label{lem:z_rr_curve}
Let $Y\in\cB$ and there exist points $y_1,y_2\in Y$ such that $\diam Y=|y_1y_2|$. Let  $Z_{r_1,r_2}(y_1,y_2)$ be a two-point extension of the space $Y$ constructed above. Then for the curve  $\g(t)=Z_{r_1t,r_2t}(y_1,y_2)$, $t\in[0,1]$, the equality
$$
2d_{GH}\big(\g(t_1),\g(t_2)\big)=|t_1-t_2|(r_1+r_2)
$$
holds for all $t_1,t_2\in[0,1]$, and hence, $\g$ is a shortest curve connecting $Y$ and $Z_{r_1,r_2}(y_1,y_2)$. In particular, putting $r_2=0$ or $r_1=0$, one gets a shortest curve connecting $Y$ with the corresponding single-point extension $Z_{r_1}(y_1)$ or $Z_{r_2}(y_2)$.
\end{lem}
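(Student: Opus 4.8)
The plan is to establish the two matching inequalities $2d_{GH}\big(\g(t_1),\g(t_2)\big)\le|t_1-t_2|(r_1+r_2)$ and $2d_{GH}\big(\g(t_1),\g(t_2)\big)\ge|t_1-t_2|(r_1+r_2)$ separately. The upper bound will come from an explicit correspondence together with Theorem~\ref{th:GH-metri-and-relations}, while the lower bound is immediate from the diameter estimate of Assertion~\ref{ass:estim}, which is exactly what makes the argument clean: extending along a diametral pair forces the diameter to grow at the maximal admissible rate.

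For the upper bound I would fix $t_1,t_2\in[0,1]$ and use the tautological correspondence $R\in\cR\big(\g(t_1),\g(t_2)\big)$ pairing each $y\in Y$ with the same $y$ in the other copy and pairing $z_i$ with $z_i$ for $i=1,2$ (reading $z_i$ as $y_i$ whenever $r_it_j=0$, so that $R$ stays surjective onto both factors). Then I would compute $\dis R$ by examining the three types of pairs of points: two points of $Y$ contribute $0$, since the metric on $Y$ is never changed; a pair consisting of $y\in Y$ and $z_i$ contributes $\big|(r_it_1+|yy_i|)-(r_it_2+|yy_i|)\big|=r_i|t_1-t_2|$; and the pair $z_1,z_2$ contributes $\big|(r_1+r_2)t_1-(r_1+r_2)t_2\big|=(r_1+r_2)|t_1-t_2|$. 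Taking the supremum gives $\dis R=(r_1+r_2)|t_1-t_2|$, whence $2d_{GH}\big(\g(t_1),\g(t_2)\big)\le\dis R=(r_1+r_2)|t_1-t_2|$.

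For the lower bound I would invoke the third item of Assertion~\ref{ass:estim}: since $Y$ is bounded, each $\g(t)$ is bounded, and the diameter formula recorded in Construction~\ref{constr:deux_point_extt} gives $\diam\g(t)=\diam Y+(r_1+r_2)t$. Hence $\big|\diam\g(t_1)-\diam\g(t_2)\big|=(r_1+r_2)|t_1-t_2|\le2d_{GH}\big(\g(t_1),\g(t_2)\big)$, and combining the two bounds yields the claimed equality.

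Finally, to conclude that $\g$ is a shortest curve, I would observe that the proved equality is additive in the parameter: for $t_1\le t_2\le t_3$ one has $d_{GH}\big(\g(t_1),\g(t_3)\big)=d_{GH}\big(\g(t_1),\g(t_2)\big)+d_{GH}\big(\g(t_2),\g(t_3)\big)$, so for any partition of $[0,1]$ the corresponding sum telescopes to $\tfrac12(r_1+r_2)=d_{GH}\big(Y,Z_{r_1,r_2}(y_1,y_2)\big)$. Thus $|\g|=d_{GH}\big(\g(0),\g(1)\big)$ and $\g$ is shortest; the single-point cases follow by setting $r_2=0$ or $r_1=0$. The only point requiring care is the bookkeeping of the identifications $z_i=y_i$ when $r_it_j=0$, which must be checked not to spoil the surjectivity of $R$ nor the distortion count; but this is routine, and I do not expect any genuine obstacle beyond it, since the essential content is simply that extending along a diametral pair makes the diameter bound of Assertion~\ref{ass:estim} sharp.
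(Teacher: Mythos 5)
Your proposal is correct and follows essentially the same route as the paper: the identity correspondence on $Y\cup\{z_1,z_2\}$ gives the upper bound $\dis R=|t_1-t_2|(r_1+r_2)$, and the diameter estimate from Assertion~\ref{ass:estim} gives the matching lower bound. Your extra care with the identifications $z_i=y_i$ and the telescoping argument for shortestness only make explicit what the paper leaves implicit.
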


\begin{proof}
Consider the correspondence $R\in\cR\bigl(\g(t_1),\g(t_2)\bigr)$ that is identical on the set $Y\cup\{z_1,z_2\}$. Then
$$
\dis R=\max\bigl\{|t_1-t_2|r_1,\,|t_1-t_2|r_2,\,|t_1-t_2|(r_1+r_2)\bigr\}=|t_1-t_2|(r_1+r_2),
$$
and so, $2d_{GH}\big(\g(t_1),\g(t_2)\big)\le|t_1-t_2|(r_1+r_2)$. On the other hand,
\begin{multline*}
\bigl|\diam\g(t_1)-\diam\g(t_2)\bigr|=\\
=\bigl|\diam Y+(r_1+r_2)t_1-\diam Y-(r_1+r_2)t_2\bigr|=|t_1-t_2|(r_1+r_2),
\end{multline*}
and hence, due to Assertion~\ref{ass:estim}, we have $2d_{GH}\big(\g(t_1),\g(t_2)\big)\ge|t_1-t_2|(r_1+r_2)$, so
$$
2d_{GH}\big(\g(t_1),\g(t_2)\big)=|t_1-t_2|(r_1+r_2),
$$
that implies Lemma's statement.
\end{proof}

\begin{ass}\label{ass:hypo_exten_deux}
Let $X,Y\in\cB$, and $Y$ be subextreme with respect to $X$. Assume that there exist $y_1,y_2\in Y$ such that $\diam Y=|y_1y_2|$. Then the metric segment $[X,Y]$ is extendable beyond $Y$ to any two-point extension $Z:=Z_{r_1,r_2}(y_1, y_2)$ of the space $Y$.
\end{ass}

\begin{proof}
By Lemma~\ref{lem:z_rr_curve}, we have $2d_{GH}(Y,Z)=r_1+r_2$.

Now, estimate $d_{GH}(X,Z)$ by means of Assertion~\ref{ass:estim} and the triangle inequality:
\begin{multline*}
\diam Z-\diam X\le 2d_{GH}(X,Z)\le 2d_{GH}(X,Y)+2d_{GH}(Y,Z)=\\
=\diam Y-\diam X+r_1+r_2=\diam Z-\diam X,
\end{multline*}
therefore,
\begin{multline*}
2d_{GH}(X,Z)=\diam Z-\diam X=\\
=\big(\diam Z-\diam Y\big)+\big(\diam Y-\diam X\big)=\\
=2d_{GH}(Y,Z)+2d_{GH}(X,Y),
\end{multline*}
that is required.
\end{proof}

\begin{rk}
In Assertion~\ref{ass:hypo_exten_deux}, the metric segment $[X,Y]$ can be extended beyond $Y$ in different ways: as to $Z_r(y_0)$, so as to $Z_r(y_1)$, and to $Z_{r_1, r_2}(y_1,y_2)$ as well. Therefore, if say $X,Y\in\cM$, where any two points are connected by a shortest curve, than any shortest curve connecting $X$ and $Y$ can branch out at the point $Y$, similarly to the situation when a geodesic comes to the vertex of a flat cone with total angle at this vertex more than $2\pi$. In particular, this situation takes place for $X=\D_1$ and the standard ``radial'' shortest curve of the form $t\mapsto t\,Y$, $t\in[0,1]$.
\end{rk}

\begin{rk}
If we restrict ourselves by the space $\cM$ of compact metric spaces, then for any $X,Y\in\cM$ that satisfy the conditions of Assertion~\ref{ass:hypo_exten_deux}, and for any $r>0$, any shortest curve connecting $X$ and $Y$ can be extended beyond $Y$ to any space $Z$ from the intersection of the sphere $S_1(\D_1)$ of radius $1/2\diam Y+r/2$ centered at $\D_1$ with the sphere $S_2(Y)$ of radius $r/2$ centered at $Y$. In particular, this intersection contains all single-point extensions of the space $Y$ with the parameter $r$, and all its two-point extensions with parameters $r_1$, $r_2$, where $r_1+r_2=r$.
\end{rk}

The proof of the next Lemma can be obtained by a trivial modification of the proof of Lemma~\ref{lem:z_rr_curve}.

\begin{lem}\label{lem:z_rr_z_ss_curve}
Assume that $Y\in\cB$ and there exist $y_1,y_2\in Y$ such that $\diam Y=|y_1y_2|$. Let  $Z_{r_1,r_2}(y_1,y_2)$ be a two-point extension of the space $Y$ defined above. Then for any $0\le s_1\le r_1$ and $0\le s_2\le r_2$ the equality
$$
2d_{GH}\big(Z_{r_1,r_2}(y_1,y_2),Z_{s_1,s_2}(y_1,y_2)\big)=s_1-r_1+s_2-r_2
$$
holds.
\end{lem}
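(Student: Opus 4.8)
The plan is to mimic exactly the two-sided estimate used in the proof of Lemma~\ref{lem:z_rr_curve}, with the two extensions $Z := Z_{r_1,r_2}(y_1,y_2)$ and $Z' := Z_{s_1,s_2}(y_1,y_2)$ playing the roles of the endpoints. Before starting I would note that, since $0\le s_i\le r_i$, the right-hand side of the stated equality should read $(r_1-s_1)+(r_2-s_2)=|s_1-r_1|+|s_2-r_2|$; the displayed expression differs only by an overall sign, which I take to be a typo and correct silently below.

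First I would establish the upper bound. Both $Z$ and $Z'$ are built on the same underlying set $Y\sqcup\{z_1,z_2\}$, so I take the correspondence $R\in\cR(Z,Z')$ that is the identity on $Y\cup\{z_1,z_2\}$. To compute $\dis R$ I compare, for each pair of points $p,q$, the quantities $|pq|_Z$ and $|pq|_{Z'}$. Distances between points of $Y$ are unchanged and so contribute $0$; for a pair $(y,z_i)$ with $y\in Y$ the difference is $\bigl|(r_i+|yy_i|)-(s_i+|yy_i|)\bigr|=r_i-s_i$; and for the pair $(z_1,z_2)$ the difference is $\bigl|(r_1+r_2+\diam Y)-(s_1+s_2+\diam Y)\bigr|=(r_1-s_1)+(r_2-s_2)$. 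The last of these dominates, so $\dis R=(r_1-s_1)+(r_2-s_2)$, and Theorem~\ref{th:GH-metri-and-relations} gives $2d_{GH}(Z,Z')\le(r_1-s_1)+(r_2-s_2)$.

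For the matching lower bound I would invoke the diameter formula from Construction~\ref{constr:deux_point_extt}, namely $\diam Z=\diam Y+r_1+r_2$ and $\diam Z'=\diam Y+s_1+s_2$, whence $\bigl|\diam Z-\diam Z'\bigr|=(r_1-s_1)+(r_2-s_2)$. Since $Z,Z'\in\cB$ have finite diameter, the third bullet of Assertion~\ref{ass:estim} yields $2d_{GH}(Z,Z')\ge\bigl|\diam Z-\diam Z'\bigr|=(r_1-s_1)+(r_2-s_2)$. Combining the two estimates gives the claimed equality.

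The only point requiring care — and the reason this is a genuine modification rather than a literal copy of Lemma~\ref{lem:z_rr_curve} — is the bookkeeping of the degenerate cases $r_i=0$ or $s_i=0$, in which $z_i$ is identified with $y_i$ and the added point effectively disappears. In each such case one simply pairs $z_i$ on the $Z$-side with its representative on the $Z'$-side; a direct check shows that the corresponding term $r_i-s_i$ reappears consistently in every relevant distance difference, so the identity correspondence and the distortion computation above remain valid verbatim. I do not expect any real obstacle beyond this case-checking.
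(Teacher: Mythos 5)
Your proof is correct and follows exactly the route the paper intends: the paper's own ``proof'' of this lemma is just the remark that it is a trivial modification of Lemma~\ref{lem:z_rr_curve}, i.e.\ the identity correspondence for the upper bound and the diameter difference via Assertion~\ref{ass:estim} for the lower bound, which is precisely what you do. Your observation that the stated right-hand side should read $(r_1-s_1)+(r_2-s_2)$ is also correct --- as printed it is non-positive under the hypothesis $0\le s_i\le r_i$.
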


Lemma~\ref{lem:z_rr_z_ss_curve} implies the following result.

\begin{ass}\label{ass:monotonic_curve}
Let $Y$ and $Z_{r_1,r_2}(y_1,y_2)$ be such as in Lemma~$\ref{lem:z_rr_z_ss_curve}$, and $r(t)$ and $s(t)$ be non-negative strictly increasing continuous functions on $t\in I$, where $I$ is a finite or infinite interval. Then the curve $\g(t)=Z_{r(t),s(t)}(y_1,y_2)$ is shortest.
\end{ass}

%%%%%%%%%%%%%%%%%%%%%%%%%%%%%%
\subsection{Examples of Non-Extendable Segments}
%%%%%%%%%%%%%%%%%%%%%%%%%%%%%%
\noindent By means of Theorem~\ref{thm:ext_extens}, one can construct examples of non-extendable metric segments and non-extendable shortest curves.

\begin{examp}\label{ex:sim_sim}
Let $X=\l\D_n$ and $Y=\l \D_m$ be single-distance spaces, and $1<m<n$. Then the metric segment $[X,Y]$ can not be extended beyond $Y$.

Indeed, $2d_{GH}(X,Y)=\l=\diam X=\diam Y$, therefore the spaces $X$ and $Y$ are mutually extreme with respect to each other. Further, $X$ and $Y$ can be partitioned into $n$ and $m$ single-point subsets of zero diameter, respectively, therefore for $m<n$ we are under assumptions of Theorem~\ref{thm:ext_extens}.
\end{examp}

We need another simple corollary from Theorem~\ref{thm:dist-n-simplex-same-dim-alpha0}, see~\cite{GrigIT_Sympl}.

\begin{cor}\label{cor:alpha0}
Let $X$ be a bounded metric space, $1<m\le\#X$, and $\a_m(X)=0$. Then
$$
2d_{GH}(\l\D_m,X)=\max\bigl\{d_m(X),\,\l,\,\diam X-\l\bigr\}.
$$
\end{cor}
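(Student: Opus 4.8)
The plan is to specialize the general formula of Theorem~\ref{thm:dist-n-simplex-same-dim-alpha0} to the hypothesis $\a_m(X)=0$ and show that the infimum over partitions of the three-term maximum collapses to the stated expression. Recall that
$$
2d_{GH}(\l\D_m,X)=\inf_{D\in\cD_m(X)}\max\bigl\{\diam D,\,\l-\a(D),\,\diam X-\l\bigr\}.
$$
Since $\a_m(X)=\sup_{D\in\cD_m(X)}\a(D)=0$ and $\a(D)\ge0$ always, we have $\a(D)=0$ for \emph{every} partition $D\in\cD_m(X)$. Therefore the middle term $\l-\a(D)$ equals $\l$ identically, and the formula simplifies at once to
$$
2d_{GH}(\l\D_m,X)=\inf_{D\in\cD_m(X)}\max\bigl\{\diam D,\,\l,\,\diam X-\l\bigr\}.
$$

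Now the second and third entries, namely $\l$ and $\diam X-\l$, do not depend on the partition $D$, so I would pull them outside the infimum. Writing $M=\max\{\l,\,\diam X-\l\}$, the quantity inside the infimum is $\max\{\diam D,\,M\}$, and since $M$ is constant in $D$ we get
$$
\inf_{D}\max\{\diam D,\,M\}=\max\Bigl\{\inf_{D}\diam D,\;M\Bigr\}=\max\{d_m(X),\,M\},
$$
using the elementary fact that for a constant $M$ one has $\inf_D\max\{f(D),M\}=\max\{\inf_D f(D),M\}$ together with the definition $d_m(X)=\inf_{D\in\cD_m(X)}\diam D$. Unwinding $M$ gives exactly $\max\{d_m(X),\,\l,\,\diam X-\l\}$, as claimed.

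The only step requiring a moment's care is the interchange $\inf_D\max\{\diam D,M\}=\max\{\inf_D\diam D,M\}$, which is where the argument could be contested; but this is an entirely routine lattice identity for a fixed lower bound $M$ (for any $D$ the left side is $\ge\max\{d_m(X),M\}$, and approximating $\diam D$ down to $d_m(X)$ makes $\max\{\diam D,M\}$ approach $\max\{d_m(X),M\}$ from above). Thus there is no substantive obstacle here: the corollary is a direct computation, and the hypothesis $\a_m(X)=0$ does all the work by forcing the $\l-\a(D)$ term to become the partition-independent constant $\l$.
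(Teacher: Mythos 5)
Your proof is correct and is exactly the intended derivation: the paper itself gives no proof, deferring to the cited reference, but the statement is meant to follow from Theorem~\ref{thm:dist-n-simplex-same-dim-alpha0} precisely as you argue (the analogous Corollary~\ref{cor:formula_simplex} is proved in the paper by the same kind of specialization). Your two key observations --- that $\a_m(X)=0$ together with $\a(D)\ge0$ forces $\a(D)=0$ for every partition, and that $\inf_D\max\{\diam D,M\}=\max\{d_m(X),M\}$ for the partition-independent constant $M=\max\{\l,\diam X-\l\}$ --- are both sound.
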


\begin{examp}\label{ex:two_points}
Now, let $X=\D_2$ be the single-distance space consisting of two points with non-zero distance $1$, and let $Y=[0,1]$ be the segment of the length $1$. Then $\a_2(Y)=0$, $d_2(Y)=1/2$, and hence $2d_{GH}(\D_2,Y)=1$ in accordance with Corollary~\ref{cor:alpha0}. As we have already seen in Corollary~\ref{cor:extendable}, the metric segment $[X,Y]$ can be extended beyond the simplex $X$. On the other hand, the spaces $X$ and $Y$ are mutually extreme. Represent the simplex $X$ as the union of its two points, and the segment $Y$ as the union of two segments $Y_1=[0,1/2]$ and $Y_2=[1/2,1]$. Theorem~\ref{thm:ext_extens} implies that $[X,Y]$ can not be extended beyond $Y$.
\end{examp}

\begin{examp}\label{ex:k_points}
Example~\ref{ex:two_points} can be generalized as follows. Let $X=\D_k$ be the unit simplex consisting of $k$ points, $1<k<\infty$, and $Y$ be a closed convex body of diameter $1$ with a smooth boundary in $(k-1)$-dimensional Euclidean space. Then $\a_k(Y)=0$, and the metric segment $[X,Y]$ can be extended beyond the simplex $X$. Due to Hadwiger's theorem~\cite{Hadw}, see also~\cite{Bolt}, the space $Y$ can be covered by $k$ subsets of diameters less than $1$, therefore $d_k(Y)<1$, and so, $2d_{GH}(\D_k,Y)=1$ in accordance with Corollary~\ref{cor:alpha0}. So, the spaces $X$ and $Y$ are mutually extreme. Represent the simplex $X$ as the union of its points, and cover $Y$ as above by $k$ subsets of diameters less than $1$. Then Theorem~\ref{thm:ext_extens} can be applied, and the segment $[X,Y]$ can not be extended beyond $Y$.
\end{examp}

%%%%%%%%%%%%%%%%%%%%%%%%%%%%%%
\subsection{Two-Sides Infinite Non-Extendability}
%%%%%%%%%%%%%%%%%%%%%%%%%%%%%%
In this Section we show that no one of metric segments in the space $\cB$ can be infinitely extended to both sides. We start with the following Lemma.

\begin{lem}\label{lem:out_ball}
There is no metric segments $[X_1,X_2]\ss\GH$ of the length $s$ in the interior of the ball of radius $s$ centered at the single-point space $\D_1$.
\end{lem}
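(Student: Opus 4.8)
The plan is to observe that a metric segment always contains its two endpoints, and then to apply the diameter estimates of Assertion~\ref{ass:estim} to force one of those endpoints out of the ball. Recall that $2d_{GH}(\D_1,W)=\diam W$, so the (open) ball of radius $s$ centered at $\D_1$ is precisely the class of spaces $W$ with $\diam W<2s$. Therefore, to establish the Lemma it is enough to exhibit a point of $[X_1,X_2]$ whose diameter is at least $2s$, and I would look for it among the endpoints themselves.

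First I would dispose of the trivial case $s=0$: the open ball of radius $0$ is empty, whereas a metric segment is non-empty (it contains $X_1$), so no such containment is possible. For $s=d_{GH}(X_1,X_2)\in(0,\infty)$, I would apply the second item of Assertion~\ref{ass:estim} to the pair $(X_1,X_2)$, which gives $2s=2d_{GH}(X_1,X_2)\le\max\{\diam X_1,\diam X_2\}$. Relabelling the endpoints if necessary, this yields $\diam X_1\ge 2s$, and hence $d_{GH}(\D_1,X_1)=\tfrac12\diam X_1\ge s$.

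To finish, I would simply note that $X_1\in[X_1,X_2]$ by the definition of a metric segment (since $|X_1X_1|+|X_1X_2|=|X_1X_2|$), while the inequality $d_{GH}(\D_1,X_1)\ge s$ places $X_1$ on or outside the sphere of radius $s$ about $\D_1$, so $X_1$ does not lie in the open ball. Consequently $[X_1,X_2]$ is not contained in that ball, which is the claim.

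Since the whole argument reduces to one application of the diameter estimate, there is no genuine obstacle here; the only point meriting a line of care is the reading of ``the ball of radius $s$''. I would use the open-ball interpretation $\{W:d_{GH}(\D_1,W)<s\}$, and I would remark that because the estimate produces an endpoint with $d_{GH}(\D_1,X_1)\ge s$ (possibly with equality), the conclusion is unaffected whether one reads this as the open ball or as the topological interior of the closed ball, an endpoint lying on the bounding sphere being excluded from the open ball in either case.
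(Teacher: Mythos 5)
Your argument is correct and is essentially the paper's own proof: both apply the bound $2d_{GH}(X_1,X_2)\le\max\{\diam X_1,\diam X_2\}$ together with $2d_{GH}(\D_1,X_i)=\diam X_i$ from Assertion~\ref{ass:estim} to conclude that one endpoint lies at distance at least $s$ from $\D_1$. Your extra remarks (the trivial case $s=0$ and the open-ball versus interior-of-closed-ball reading) are harmless refinements of the same one-line argument.
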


\begin{proof}
Due to Assertion~\ref{ass:estim}, we have $2s\le\max\{\diam X_1,\diam X_2\}$, and since $2d_{GH}(\D_1,X_i)=\diam X_i$, then the distance from $\D_1$ to one of $X_i$ is more or equal than $s$. Lemma is proved.
\end{proof}

\begin{ass}\label{ass:inf_ext_2sides}
Let $Z\in\cB$ be an interior point of an $\e$-shortest curve $\g$. Then at least one of the ends of the curve $\g$ is contained in the ball of radius $R=\diam Z+\e$ centered at the single-point space $\D_1$.
\end{ass}

\begin{proof}
Let an $\e$-shortest curve $\g$ connect points $X_1$ and $X_2$, and both these points lie outside the ball of radius $R$ centered at $\D_1$. Then the segment of the curve $\g$ between $Z$ and $X_i$, $i=1,2$, contains a point $Y_i$ lying at the sphere $S$ of a radius $R'>R$ centered at $\D_1$. Therefore, $\diam Y_i=2R'$, $i=1,2$, and $2d_{GH}(Y_i,Z)\ge\diam Y_i-\diam Z=2R'-\diam Z$, $i=1,2$. Further, the segment $\g$ between $Y_1$ and $Y_2$ is also an $\e$-shortest curve. Therefore, due to Lemma~\ref{lem:eps-geod},
$$
d_{GH}(Y_1,Y_2)\ge d_{GH}(Y_1,Z)+d_{GH}(Z,Y_2)-\e\ge2R'-\diam Z-\e.
$$
On the other hand, $d_{GH}(Y_1,Y_2)\le\frac12\max\{\diam Y_1,\diam Y_2\}=R'$, and hence, $R<R'\le\diam Z+\e$, a contradiction.
\end{proof}

\begin{cor}
No metric segment can be infinitely extended beyond both its endpoints.
\end{cor}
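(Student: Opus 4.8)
The plan is to reduce the statement to Assertion~\ref{ass:inf_ext_2sides}, which already pins down the position of an interior point of a short curve relative to the single-point space $\D_1$. The guiding observation is that extending a segment beyond \emph{both} of its endpoints turns one of the original endpoints into an interior point of the enlarged segment, and such an interior point has a \emph{fixed, finite} diameter; meanwhile an infinite two-sided extension would push both new endpoints arbitrarily far from $\D_1$, which is exactly what Assertion~\ref{ass:inf_ext_2sides} forbids.

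First I would make precise what a two-sided infinite extension means. Suppose, for contradiction, that a non-degenerate metric segment $[X_1,X_2]\ss\cB$ admits arbitrarily long extensions beyond both endpoints. For a number $s$ to be chosen later I extend $[X_1,X_2]$ beyond $X_2$ to a space $Z$ with $d_{GH}(X_2,Z)\ge s$, obtaining $[X_1,Z]\sp[X_1,X_2]$, and then extend $[X_1,Z]$ beyond $X_1$ to a space $W$ with $d_{GH}(X_1,W)\ge s$, obtaining $[W,Z]$. Since $X_1$ lies between $W$ and $Z$, Lemma~\ref{lem:metr_segm} gives $[X_1,Z]\ss[W,Z]$, hence $X_2\in[X_1,Z]\ss[W,Z]$, i.e.\ $X_2$ lies between $W$ and $Z$. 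Additivity of distances along the segment yields $|WZ|=|WX_1|+|X_1X_2|+|X_2Z|$, whence $|WX_2|=|WX_1|+|X_1X_2|\ge|WX_1|>0$ and $|X_2Z|>0$, so $X_2$ is a genuine interior point of $[W,Z]$. Note also that $|\D_1Z|\ge d_{GH}(X_2,Z)-|\D_1X_2|$ and $|\D_1W|\ge d_{GH}(X_1,W)-|\D_1X_1|$, so both endpoints can be forced arbitrarily far from $\D_1$ by taking $s$ large.

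Next I would manufacture a short curve through $X_2$. Because the Gromov--Hausdorff distance on $\cB$ is intrinsic (Corollary~\ref{cor:intrinsic}), for every $\e>0$ there are $\e$-shortest curves joining $W$ to $X_2$ and $X_2$ to $Z$; by Lemma~\ref{lem:union_eps_geo} their union is a $2\e$-shortest curve $\g$ connecting $W$ and $Z$ having $X_2$ as an interior point. Applying Assertion~\ref{ass:inf_ext_2sides} to $\g$ with interior point $X_2$, at least one of $W,Z$ lies in the ball of radius $\diam X_2+2\e$ centered at $\D_1$; that is, $\min\{|\D_1W|,|\D_1Z|\}\le\diam X_2+2\e$. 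Letting $\e\to0$ gives the clean bound $\min\{|\D_1W|,|\D_1Z|\}\le\diam X_2$.

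The contradiction is then immediate: the right-hand side $\diam X_2$ depends only on the fixed space $X_2$ and is finite, since $X_2\in\cB$, whereas choosing $s>\diam X_2+\max\{|\D_1X_1|,|\D_1X_2|\}$ forces both $|\D_1W|$ and $|\D_1Z|$ to exceed $\diam X_2$. Hence no two-sided infinite extension exists. I expect the only delicate point to be the bookkeeping of the first paragraph, namely verifying that $X_2$ is truly an interior point of $[W,Z]$ and that driving the extension lengths to infinity indeed drives $|\D_1W|$ and $|\D_1Z|$ to infinity; the geometric substance is entirely contained in Assertion~\ref{ass:inf_ext_2sides}, so once the configuration is set up correctly the conclusion follows with no further estimates.
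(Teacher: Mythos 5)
Your argument follows the same core strategy as the paper's proof: manufacture an $\e$-shortest curve through one of the original endpoints by gluing $\e$-shortest pieces (Corollary~\ref{cor:intrinsic} and Lemma~\ref{lem:union_eps_geo}), then invoke Assertion~\ref{ass:inf_ext_2sides} to trap one of the far endpoints in a ball around $\D_1$, contradicting the lower bounds $|\D_1 Z|\ge|X_2Z|-|\D_1X_2|$ and $|\D_1 W|\ge|X_1W|-|\D_1X_1|$. All of your individual steps check out: the additivity $|WZ|=|WX_1|+|X_1X_2|+|X_2Z|$, the verification via Lemma~\ref{lem:metr_segm} that $X_2$ lies between $W$ and $Z$, and the final choice of $s$.

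The substantive difference is in how you formalize ``extended beyond both endpoints,'' and it changes what is proved. You assume a \emph{nested} extension: first $Z$ with $X_2\in[X_1,Z]$, then $W$ obtained by extending the \emph{enlarged} segment $[X_1,Z]$ beyond $X_1$, so that both original endpoints sit inside the single segment $[W,Z]$. The paper assumes only that the \emph{original} segment $[X_1,X_2]$ admits, separately, a long extension $Y_1$ beyond $X_1$ and a long extension $Y_2$ beyond $X_2$; these two extensions need not be compatible, and nothing forces $X_1$ or $X_2$ to lie between $Y_1$ and $Y_2$. Your hypothesis for contradiction is therefore strictly stronger, and your conclusion correspondingly weaker: extendability of $[X_1,X_2]$ beyond $X_1$ does not supply an extension of the longer segment $[X_1,Z]$ beyond $X_1$, so under the paper's reading you cannot produce your $W$, and the reduction stalls at its first step. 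That said, your version buys something real: Lemma~\ref{lem:union_eps_geo} requires the gluing point to lie between the two ends of the union, which you verify exactly because your construction is nested ($X_2\in[W,Z]$). The paper's proof glues three curves and asserts the union is $\e$-shortest between $Y_1$ and $Y_2$, which needs $X_2$ (or $X_1$) to lie between $Y_1$ and $Y_2$ --- a betweenness relation that does not follow from $X_1\in[Y_1,X_2]$ and $X_2\in[X_1,Y_2]$ alone. So your proof is a correct and fully rigorous proof of the nested-extension version of the statement, but you should either strengthen the setup to handle two independent one-sided extensions (as the paper intends) or state explicitly that you are proving the weaker, single-containing-segment formulation.
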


\begin{proof}
Assume to the contrary that for an arbitrary $\e>0$ and each $i=1,\,2$ there exists an extension of some metric segment $[X_1,X_2]$ beyond its end $X_i$ to some $Y_i$ such that $d_{GH}(X_i,Y_i)>\max\{\diam X_1,\diam X_2\}+\e$. Due to extension definition, $Y_i\in\cB$, $i=1,2$. In accordance to Corollary~\ref{cor:intrinsic}, there exist $(\e/3)$-shortest curves connecting $Y_1$ and $X_1$,  $X_1$ and $X_2$, and $X_2$ and $Y_2$, respectively. By Lemma~\ref{lem:union_eps_geo}, the union of these three curves is an $\e$-shortest curve between $Y_1$ and $Y_2$, and it contains $X_1$ and $X_2$ in its (relative) interior. By Assertion~\ref{ass:inf_ext_2sides}, one of the end-points of this curve, i.e., one of the points $Y_i$, say $Y_1$, is contained in the ball of radius $\max\{\diam X_1,\diam X_2\}+\e$ centered at $\D_1$, i.e., $\diam Y_1\le2\max\{\diam X_1,\diam X_2\}+2\e$. By Assertion~\ref{ass:estim}, we have
$$
d_{GH}(Y_1,X_1)\le\frac12\max\{\diam Y_1,\diam X_1\}\le\max\{\diam X_1,\diam X_2\}+\e,
$$
a contradiction.
\end{proof}

%%%%%%%%%%%%%%%%%%%
%\renewcommand\bibname{References}
%\addcontentsline{toc}{chapter}{\bibname}
\markright{References}

\end{document}